\newtheorem{theorem}{Theorem}
\newtheorem{cor}[theorem]{Corollary}
\newtheorem{lemma}[theorem]{Lemma}
\newtheorem{prop}[theorem]{Proposition}
\newtheorem{example}[theorem]{Example}
\newtheorem{remark}[theorem]{Remark}
\def\R{{\mathbb R}}
\def\RR{{\mathbb R}}
\def\NN{{\mathbb N}}
\def\PP{{\mathbb P}}
\def\C{{\mathbb C}}
\def\P#1{{\mathbb P}^#1}
\newcommand{\inter}{\operatorname{int}}
\newcommand{\cl}{\operatorname{cl}}
\newcommand{\CC}{\mathbb{C}}
\newcommand{\rank}{\operatorname{rank}}
\numberwithin{theorem}{section}
\begin{document}

\author{Alessandra Bernardi, Grigoriy Blekherman, Giorgio Ottaviani}
\title{On real typical ranks}

\begin{abstract}
We study typical ranks with respect to a real variety $X$. Examples of such are tensor rank ($X$ is the Segre variety) and symmetric tensor rank ($X$ is the Veronese variety). We show that any rank between the minimal typical rank and the maximal typical rank is also typical. We investigate typical ranks of $n$-variate symmetric tensors of order $d$, or equivalently homogeneous polynomials of degree $d$
in $n$ variables, for small values of $n$ and $d$. We show that $4$ is the unique typical rank of real ternary cubics, and quaternary cubics have typical ranks $5$ and $6$ only. For ternary quartics we show that $6$ and $7$ are typical ranks and that all typical ranks are between $6$ and $8$. For ternary quintics we show that the typical ranks are between $7$ and $13$.

\end{abstract}
\maketitle

\section{Introduction}

Let $K$ be a field, let $X\subset {\mathbb P } (K^{N+1})$ be a projective variety and let $\hat{X}\subset K^{N+1}$ be the cone over $X$.
{ We assume that $X$ is nondegenerate, namely that $X$ is not contained in a hyperplane.}
The \emph{rank} of a point $x\in K^{n+1}$ with respect to $X$ ($X$-rank for short) is the minimum $k$ such that there exists a decomposition 
$x=\sum_{i=1}^k{\lambda_i}x_i$, with $x_i\in \hat{X}, {\lambda_i\in K}$. 

\indent We note that, either over $\mathbb{C}$ or $\mathbb{R}$, the rank with respect to the Segre variety is the usual tensor rank, and the rank with respect to the Veronese variety is the symmetric tensor rank and these are the examples of greatest relevance in applications.

If $K=\mathbb{C}$, then the set $S_k:=\{x\in{\mathbb P}^N\; |\; \textrm{rank}(x)=k\} \subset \mathbb{P}^N$ is a constructible set and $S_k$ has non-empty interior for a unique value of $k$, which is called the \emph{generic rank}. This holds in both Euclidean and Zariski topologies. If $K=\R$ then
$S_k$ is a semialgebraic set and it has non-empty interior (with the Euclidean topology) for several values of $k$, which are called the \emph{typical ranks}.
It is known that the smallest typical (real) rank coincides the generic (complex) rank \cite{BT}.

Typical ranks are of interest for applications, since on an open subset the rank of a point does not change under small perturbations in the data  (see eg. \cite{ComoT08:lasvegas}, \cite{ComoTDC09:laa}). For a real projective variety $X$ with dense real points, let $X_{\mathbb{C}}$ denote the complexification of $X$, i.e. $X_{\CC}$ consists of all complex points defined by the vanishing ideal of $X$. It is known that when $X$ is irreducible, the smallest typical $X$-rank over $\R$ coincides with
the generic $X_{\CC}$-rank over $\C$ \cite{BT}. 

Note that tensors of order $2$ are simply matrices and tensor rank coincides with the usual matrix rank over any field $K$. In this case there is a unique typical rank over $\RR$. Similarly, symmetric tensors of order $2$ are symmetric matrices, and again the tensor rank coincides with the usual matrix rank over any field $K$. When $K=\RR$ there is unique typical rank for tensors in $S^2\RR^n$, which is $n$. 
The situation is much more complicated when the order of the tensors is larger than $2$.
It was observed in \cite{MR1088949} that the typical ranks for tensors in ${\mathbb P}(\R^2\otimes\R^2\otimes\R^2)$
are $2$ and $3$. More sparse results can be found in \cite{MR1693919}, \cite{MR1818596}, \cite{MR2257591}, \cite{ComoTDC09:laa}, \cite{SSM}.
For monomials, results which show the difference between the real and the complex case can be found in \cite{BCG,CCG}.

For symmetric tensors, it was shown in \cite{CO} that for bivariate symmetric tensors of order $d$, the top typical rank is $d$ and the lowest typical rank is $\left\lfloor \frac{d}{2}\right\rfloor+1$. They also showed that for $d=5$, there are three typical rank $3,4,5$ and they conjectured that for all $d$ all ranks between $\left\lfloor \frac{d}{2}\right\rfloor+1$ and $d$ are typical. This conjecture was proved in \cite{B}, see also \cite{CR}. Our first main result is a considerable extension of \cite{B}: we show that for any real projective variety $X$ any rank between the lowest typical rank and the top typical rank is also typical:
\begin{theorem}\label{thm:typical}
Let $X\subset \PP_{\RR}^N$ be a real projective variety. Then any $X$-rank between the lowest typical rank and the highest typical rank is also typical.
\end{theorem}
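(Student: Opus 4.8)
The plan is to prove: if $r_{\min}$ and $r_{\max}$ denote the smallest and largest typical $X$-ranks, then every $r$ with $r_{\min}<r<r_{\max}$ is typical; since $r_{\min}$ and $r_{\max}$ are themselves typical (only finitely many ranks occur, and at least one is typical, so both are well defined), this gives the theorem. I would work with the affine cones throughout: the $X$-rank is constant along lines through the origin, so a rank $k$ is typical exactly when the cone $C_k\subseteq\RR^{N+1}$ of points of $X$-rank $k$ has nonempty interior (equivalently $S_k\subseteq\PP^N$ has nonempty interior). Set
\[
\Phi_k\colon \hat X(\RR)^k\longrightarrow\RR^{N+1},\qquad (x_1,\dots,x_k)\longmapsto x_1+\cdots+x_k .
\]
Because $\hat X$ is a cone, $\rank(v)\le k$ if and only if $v\in\operatorname{Image}(\Phi_k)$, so by Tarski--Seidenberg $v\mapsto\rank(v)$ is a semialgebraic function. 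I would also record that $r_{\min}$ equals the generic $\CC$-rank, so $\Phi_{r_{\min},\CC}$ is dominant; as the images are nested ($0\in\hat X$), $\Phi_{k,\CC}$ is dominant for every $k\ge r_{\min}$.

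The one piece of geometry I need is the triangle-inequality bound: adding a single element $x\in\hat X$ changes the rank by at most one in each direction, $\rank(v+x)\le\rank(v)+1$ and $\rank(v)=\rank\bigl((v+x)+(-x)\bigr)\le\rank(v+x)+1$. Hence for any $(x_1,\dots,x_m)\in\hat X(\RR)^m$ the integer sequence $j\mapsto\rank(x_1+\cdots+x_j)$ starts at $0$, ends at $\rank(\sum_i x_i)$, and moves by at most one at each step, so it attains every intermediate value. Now choose, as we may since $r_{\max}$ is typical, a nonempty open set $U\subseteq\RR^{N+1}$ all of whose points have rank $r_{\max}$; then $\mathcal D:=\Phi_{r_{\max}}^{-1}(U)$ is a nonempty open subset of $\hat X(\RR)^{r_{\max}}$. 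For each $d=(x_1,\dots,x_{r_{\max}})\in\mathcal D$ the above sequence runs from $0$ to $r_{\max}$, hence passes through $r$; let $a(d)$ be the largest $j$ with $\rank(x_1+\cdots+x_j)=r$. From $\rank(x_1+\cdots+x_j)\le j$ and $\rank(x_1+\cdots+x_{r_{\max}})=r_{\max}\ne r$ we get $r\le a(d)\le r_{\max}-1$. The sets $B_a:=\{d\in\mathcal D: a(d)=a\}$ are semialgebraic and finite in number and cover $\mathcal D$, so some $B_{a_0}$ is full-dimensional, and $a_0\ge r>r_{\min}$. (I would restrict $\hat X(\RR)$ to its smooth locus from the start so that full dimension forces nonempty interior; $\mathcal D$ still meets the smooth locus, by perturbing a minimal decomposition of a point of $U$.)

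It remains to push $\inter(B_{a_0})$ forward by $\Phi_{a_0}$, using that $\Phi_{a_0}$ involves only the first $a_0$ coordinates. Choose a product neighbourhood $W_1\times\cdots\times W_{r_{\max}}\subseteq\inter(B_{a_0})$; then for every $(x_1,\dots,x_{a_0})\in W_1\times\cdots\times W_{a_0}$, completing it to any point of the box — which lies in $B_{a_0}$ — shows $\rank(x_1+\cdots+x_{a_0})=r$. Since $a_0\ge r_{\min}$, the complexification $\Phi_{a_0,\CC}$ is dominant, so its submersion points are Zariski dense; assuming (as is the case in the applications) that $X$ is irreducible with Zariski-dense real points, I would position the first $a_0$ factors of the box around a submersion point of $\Phi_{a_0}$, so that $\Phi_{a_0}(W_1\times\cdots\times W_{a_0})$ contains a nonempty open subset of $\RR^{N+1}$. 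That open set consists of points of rank $r$, so $C_r$ has nonempty interior and $r$ is typical. I expect the real work to be in this last paragraph — the semialgebraic bookkeeping ensuring that truncating the minimal decomposition of a generic rank-$r_{\max}$ point still yields a full-dimensional family of rank-$r$ points; the other ingredients (semialgebraicity of the rank, the $\pm1$ estimate, dominance of $\Phi_{k,\CC}$ for $k\ge r_{\min}$) are elementary or recorded above.
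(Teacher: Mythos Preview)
Your argument is essentially correct under the hypotheses you add (irreducibility and Zariski-dense real points), but it takes a genuinely different route from the paper's.

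The paper argues by closure absorption. It shows that if $r$ is typical but $r+1$ is not, then the set $A:=\cl(\inter S_{\le r})$ is all of $\RR^{N+1}$, so no rank above $r$ can be typical; the theorem follows by contraposition. The two ingredients are an elementary semialgebraic lemma, $\inter(\cl S)\subseteq \cl(\inter S)$, and the observation that $\inter S_{\le r}+\hat X\subseteq \inter S_{\le r+1}\subseteq A$, which by induction gives $A+\hat X+\cdots+\hat X\subseteq A$. No dominance, submersion, or smoothness considerations enter: the proof works whenever every real point has finite $X$-rank, with no irreducibility or density assumption on $X(\RR)$.

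Your approach is instead constructive: you locate rank-$r$ points as truncations of minimal decompositions of rank-$r_{\max}$ points, using the $\pm1$ step bound to force the partial-sum rank sequence through $r$, and then push the resulting full-dimensional family forward via a submersion argument for $\Phi_{a_0}$. This last step is exactly where the extra hypotheses are spent --- you need $\Phi_{a_0,\CC}$ dominant (hence $X$ irreducible and $a_0\ge r_{\min}$) and the real smooth locus large enough that a Euclidean-open box in $\inter(B_{a_0})$ meets the preimage of the submersion locus. The trade-off: the paper's argument is shorter, uses only soft semialgebraic facts, and is strictly more general; yours gives an explicit geometric picture of where the intermediate-rank open sets come from, and the intermediate-value observation for partial sums is a nice piece of insight not present in the paper's proof.
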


Theorem \ref{thm:typical} is proved in Section \ref{sec:typical}. In addition to \cite{B} quoted above, we mention that the case of Theorem \ref{thm:typical} when $X$ is a Segre variety with three factors was proved
in \cite[Theorem 7.1]{Fr}. In the remainder of the paper we investigate typical ranks of $n$-variate real symmetric tensors of order $d$ (equivalently real forms in $n$ variables of degree $d$) for small values of $n$ and $d$. 
{The case of ternary cubics is classical and goes back essentially to XIX century.
This case has been recently considered in detail in \cite{Ba}, where the complete classification, according to rank, is shown.} 

We summarize our results in the following Theorem:

\begin{theorem}\label{main} \hspace{0.2cm} 
Let $S^d\mathbb{R}^{n}$ be the space of $n$-variate real symmetric tensors of order $d$.
\begin{enumerate}
\item{}\label{(1)}Tensors in $\mathbb{P}(S^3\mathbb{R}^{3})$ have one typical rank, which is $4$.
\item{}\label{(2)}Tensors in $\mathbb{P}(S^3\mathbb{R}^{4})$ have two typical ranks, which are $5$ and $6$.
\item{}\label{(3)}Tensors in $\mathbb{P}(S^4\mathbb{R}^{3})$ have at least two typical ranks, which are $6$, $7$.
The maximum typical rank is at most $8$.
\item{}\label{(4)}Tensors in $\mathbb{P}(S^5\mathbb{R}^{3})$ have {at least two typical ranks, which are $7$ and $8$.
The maximum typical rank is  at most $13$.}
\end{enumerate}
\end{theorem}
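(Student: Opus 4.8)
The plan has three layers, applied case by case. First, the left endpoints: by \cite{BT} the smallest real typical $X$-rank equals the generic complex $X$-rank, so I would read the latter off Alexander--Hirschowitz. The varieties $v_3(\PP^2)$, $v_3(\PP^3)$, $v_5(\PP^2)$ are non-defective, so a direct count of $\dim\sigma_r$ gives generic complex ranks $4$, $5$, $7$; the quartic Veronese surface $v_4(\PP^2)$ is the classical exceptional case, $\dim\sigma_5(v_4(\PP^2))=13<14$, so the generic complex rank is $6$. This fixes the left endpoints $4,5,6,7$ in (1)--(4), and by Theorem~\ref{thm:typical} it then suffices to pin down one further typical rank and an upper bound for the maximal one in each case (for (1) the two coincide).

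\textbf{The next typical rank.} For (2) and (4) I would exploit identifiability of the minimal complex decomposition. A general quaternary cubic has a unique Sylvester pentahedron, so for a real quaternary cubic the five vertices form a Galois-stable set, hence a union of real points and conjugate pairs; on the open cell where they are three real points and one pair $\{q,\bar q\}$ no real decomposition into five cubes exists, so the real rank is $\ge 6$, while $q^3+\bar q^3=2\operatorname{Re}(q^3)$ is a binary cubic of real rank $\le 3$, giving real rank $\le 3+3=6$: thus $6$ is typical. Likewise it is classical that a general ternary quintic is identifiable ($\sigma_7(v_5(\PP^2))$ fills $\PP^{20}$ with finite, in fact single, generic fibre), so on the open set where the seven points of the unique decomposition are not all real the real rank is $\ge 8$; following a path $g+t\,p^5$ from a general real rank-$7$ quintic $g$ until a conjugate pair forms produces forms with a manifest decomposition into eight fifth powers, so there the real rank is exactly $8$, proving $8$ typical. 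For (3) the general rank-$6$ ternary quartic is not identifiable, so I would use positivity instead: if $f=\sum_{i=1}^{6}\lambda_i\ell_i^4$ over $\RR$, then $\operatorname{Cat}_2(f)\colon S^2(\RR^3)^*\to S^2(\RR^3)$ has the same signature as $\operatorname{diag}(\lambda_1,\dots,\lambda_6)$ when the $\ell_i^2$ are linearly independent and has rank $<6$ otherwise; hence $\operatorname{Cat}_2(f)\succ 0$ forces all $\lambda_i>0$, so $f$ is a sum of fourth powers and thus nonnegative. Consequently on the nonempty open set of ternary quartics with $\operatorname{Cat}_2(f)\succ 0$ that are not nonnegative the real rank is $\ge 7$; intersecting with the open set of quartics having a nondegenerate real decomposition into seven fourth powers shows $7$ is typical.

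\textbf{Upper bounds on the maximal typical rank.} For (1), once $4$ is the minimal typical rank I would show the ternary cubics of real rank $\le 4$ are Euclidean-dense (so no larger rank is typical), either via the classification in \cite{Ba}, or directly by checking that for a general real cubic the apolar net of conics $f^\perp_2\subset S^2(\RR^3)$ contains a pencil with four distinct real base points, apolarity then giving real rank $\le 4$. For (2), the pentahedral decomposition above, carried through the remaining cell (one real vertex, two conjugate pairs), should show every general quaternary cubic has real rank $\le 6$, so no rank $\ge 7$ is typical and the typical ranks are exactly $\{5,6\}$. For (3) and (4) I would use apolarity plus a dimension count: the apolar ideal of a general form has a known Hilbert function (for a general ternary quartic the middle catalecticant is invertible, forcing $(1,3,6,3,1)$; similarly for quintics), and a Grassmannian incidence computation exhibits a positive-dimensional family of length-$8$ (resp.\ length-$13$) zero-dimensional apolar subschemes; exhibiting one explicit real reduced member and deforming it by the implicit function theorem yields an open set of forms of real rank $\le 8$ (resp.\ $\le 13$), which caps the maximal typical rank.

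\textbf{Main obstacle.} The complex inputs (Alexander--Hirschowitz, secant dimension counts, the Sylvester and ternary-quintic identifiability theorems, the catalecticant hypersurface) are off the shelf; the real side carries all the difficulty. I expect the hard part to be twofold. First, proving the open sets above are genuinely \emph{nonempty}: one needs explicit witnesses — a non-nonnegative ternary quartic with positive definite $\operatorname{Cat}_2$, a quaternary cubic realizing each pentahedral real type, a real rank-$7$ quintic for which the path $g+t\,p^5$ actually crosses the relevant discriminant. Second, in the maximal-rank bounds one must upgrade ``there is a complex family of apolar schemes of the right length'' to ``there is a real reduced one through a general real form''; this is the step that forces the case-by-case treatment and, for the ternary quartic and quintic, is what leaves a gap between the typical ranks we can exhibit and the stated upper bounds.
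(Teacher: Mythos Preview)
Your overall architecture (Alexander--Hirschowitz $+$ \cite{BT} for the left endpoint, Theorem~\ref{thm:typical} to fill in, then a witness for the next rank and an upper bound) matches the paper's. The identifiability arguments for (2) and (4) are also the paper's route. But there are two genuine gaps.

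\medskip
\textbf{Part (3), lower bound.} Your witness set ``$\operatorname{Cat}_2(f)\succ 0$ and $f$ not nonnegative'' is empty. For a ternary quartic one has $f(v)=\langle v^{2},M_f\,v^{2}\rangle$ (up to a positive constant), so $M_f\succ 0$ forces $f>0$ on $\RR^3\setminus\{0\}$. Equivalently, your own deduction ``$M_f\succ 0$ and $\mathrm{rk}_\RR(f)=6$ $\Rightarrow$ $f\ge 0$'' combines with Reznick's theorem (Theorem~\ref{thm:rez}: $M_f\succeq 0\Rightarrow \mathrm{rk}_\RR(f)=\mathrm{rk}(M_f)$) to show that \emph{every} $f$ with $M_f\succ 0$ already has real rank $6$ and is positive. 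So this argument cannot produce a single form of rank $\ge 7$. The paper instead works with \emph{indefinite} catalecticants: it exhibits a quartic with signature $(5,1)$ (respectively $(4,2)$) together with a real line meeting $V_\RR(f)$ in four points (respectively hyperbolicity), and uses a sign--count on restrictions to lines to force at least $7$ summands (Proposition~\ref{prop:line}, Corollaries~\ref{cor:line} and~\ref{cor:hyp}, Proposition~\ref{prop:typ7}). Some mechanism of this kind, mixing the catalecticant signature with real--topological information about $V_\RR(f)$, is what is missing from your plan.

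\medskip
\textbf{Part (2), case of two conjugate pairs.} Your sketch for the upper bound says the pentahedral analysis ``carried through the remaining cell'' gives real rank $\le 6$. But the trick you used in the $(3\text{ real},1\text{ pair})$ cell --- replacing a conjugate pair $l^3+\bar l^3$ by a real binary cubic of rank $\le 3$ --- applied twice gives only $1+3+3=7$. The paper does \emph{not} repeat that trick in case~(iii); instead it takes the real point $z=Q_{123}$, observes that the polar $P_z(\phi)$ is a real quadric of rank $2$, subtracts an explicit rank--$2$ cubic $\psi$ in the two associated real linear forms so that $P_z(\phi-\psi)=0$, and then $\phi-\psi$ depends on only three variables and has real rank $\le 4$ by part~(1), yielding $2+4=6$ (Proposition~\ref{prop:cubic}). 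Your outline does not contain this reduction, and without it the upper bound $6$ in case~(iii) is unproved.

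\medskip
\textbf{Upper bounds in (3) and (4).} Your ``apolarity plus Grassmannian incidence'' plan is plausible but nonspecific, and the step ``exhibit one real reduced apolar scheme and deform'' is exactly the hard part you flag. The paper bypasses this entirely with a short, uniform device: for generic $Q$ the polar $P_Q(f)$ is a form of one lower degree whose typical real rank is already controlled, and after subtracting the lifted summands the remainder is a \emph{binary} form. This gives $4+4=8$ for ternary quartics and $8+5=13$ for ternary quintics (Propositions~\ref{prop:typquartic} and the quintic analogue). That polar--plus--binary reduction is the missing concrete idea in your upper--bound strategy.
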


(\ref{(1)}) and (\ref{(2)}) are proved in Section \ref{v3P2}. (\ref{(3)}) and (\ref{(4)}) are proved respectively in Section \ref{quartics} and \ref{quintics}.
Section \ref{sec:symeven} is devoted to the special class of symmetric even quartics.
We leave as an open problem to complete the classification of typical ranks in $S^4\mathbb{R}^{3}$. We remark that the maximum complex rank
in $S^4\CC^{3}$ is $7$, see \cite{DeP}.

\section{Typical Ranks}\label{sec:typical}

Let $X \subset \PP^n_{\RR}$ be a real projective variety and let $\hat{X} \subset \RR^{n+1}$ be the cone over $X$.  We will use $\inter S$ and $\accentset{\circ}{S}$ to denote the interior and $\cl S$ and $\bar{S}$ to denote the closure of a set $S \subseteq \RR^k$
 in the Euclidean topology. We first prove an elementary Lemma about real semialgebraic sets.

\begin{lemma}\label{lem:intcl}
Let $S$ be a semialgebraic set in $\RR^k$. Then $\inter (\cl S) \subseteq \cl (\inter S)$.
\end{lemma}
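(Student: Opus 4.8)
The statement is a standard fact about semialgebraic sets, and the natural tool is a cell (or stratification) decomposition. The plan is to use the fact that every semialgebraic set $S \subseteq \RR^k$ admits a finite partition $S = \bigsqcup_{i} C_i$ into semialgebraic cells $C_i$, each of which is semialgebraically homeomorphic to an open cube $(0,1)^{d_i}$; in particular each $C_i$ is a connected semialgebraic manifold of dimension $d_i$. Let $C_1, \dots, C_m$ be those cells of top dimension $d := \dim S$, and let $Z = \bigcup_{j : d_j < d} C_j$ be the union of the lower-dimensional cells, so $\dim Z < d$. I would then show that $\cl S = \cl(C_1 \cup \dots \cup C_m) \cup \cl(Z)$, and note that $\cl(Z)$ has dimension $< d$, hence empty interior in $\RR^k$ (here one uses $d \le k$; if $d < k$ then $\cl S$ itself has empty interior and the inclusion is trivial, so the interesting case is $d = k$).

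Given this, the core of the argument is: if $x \in \inter(\cl S)$, then $x$ has a neighborhood contained in $\cl S$. Since $\cl(Z)$ has empty interior, $x$ must lie in $\cl(C_1 \cup \dots \cup C_m)$, in fact in the closure of the union of the full-dimensional cells; and since there are finitely many of them, $x \in \cl(C_{i_0})$ for some $i_0$ with $\dim C_{i_0} = k$. But a $k$-dimensional cell $C_{i_0}$ in $\RR^k$ is an open subset of $\RR^k$ in the Euclidean topology (a $k$-dimensional semialgebraic submanifold of $\RR^k$ is Euclidean-open — this is where dimension equal to ambient dimension is used). Therefore $C_{i_0} \subseteq \inter S$, and so $x \in \cl(C_{i_0}) \subseteq \cl(\inter S)$, which is exactly what we want. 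Thus $\inter(\cl S) \subseteq \cl(\inter S)$.

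The step I expect to require the most care is the claim that a full-dimensional cell $C_{i_0} \subseteq \RR^k$ is Euclidean-open, and the closely related bookkeeping that $\cl S$ is covered by the closures of the top-dimensional cells up to a set of dimension $< k$. Both are standard consequences of the cell decomposition theorem and of the fact that the dimension of a semialgebraic set equals the maximal dimension of a cell in any such decomposition, and that $\dim \cl T = \dim T$ for semialgebraic $T$; I would simply cite these from a standard reference on semialgebraic geometry (e.g. Bochnak–Coste–Roy) rather than reprove them. An alternative, slightly more hands-on route avoiding cell decomposition is to use the fact that a semialgebraic set $S$ and its closure differ only on a set of lower dimension, i.e. $\dim(\cl S \setminus S) < \dim S$, together with the fact that the set of non-manifold points of $S$ (equivalently, points where $S$ is not locally a Euclidean-open set when $\dim S = k$) is again semialgebraic of smaller dimension; removing both of these lower-dimensional "bad" sets from $\inter(\cl S)$ changes nothing topologically, and what remains consists of points each having a neighborhood inside $S$ on which $S$ is $k$-dimensional, hence Euclidean-open, giving membership in $\cl(\inter S)$.
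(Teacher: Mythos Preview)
Your argument via cell decomposition is correct. The key steps --- that a $k$-dimensional semialgebraic cell in $\RR^k$ is Euclidean-open, that $\dim(\cl T)=\dim T$ for semialgebraic $T$, and that a finite union of closed sets of dimension $<k$ has empty interior --- are all standard, and the way you assemble them is sound: once you know $\inter(\cl S)$ is covered by $\cl(C_1\cup\dots\cup C_m)\cup\cl(Z)$ with $\cl(Z)$ nowhere dense, the inclusion $\inter(\cl S)\subseteq\cl(C_1)\cup\dots\cup\cl(C_m)\subseteq\cl(\inter S)$ follows exactly as you say.

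The paper, however, takes a more elementary route that avoids cell decomposition entirely. It writes $S=\bigcup_i H_i$ with each $H_i$ a basic set $\{p_j>0,\ h_\ell=0\}$, lets $S'$ be the union of those $H_i$ having no equality constraints, and observes that $S'$ is open while $S\setminus S'$ lies in a finite union of proper hypersurfaces and is therefore nowhere dense. From this one checks $\inter(\cl S)=\inter(\cl S')$ and $\cl(\inter S)=\cl(\inter S')$, reducing to the open set $S'$, for which $\inter(\cl S')\subseteq\cl S'=\cl(\inter S')$ is immediate. So both proofs rest on the same idea --- split off a nowhere dense ``thin'' part and reduce to an open set --- but the paper does this directly from the defining description of a semialgebraic set, whereas you invoke the cell decomposition theorem. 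Your approach is perfectly valid and arguably more structural; the paper's buys you a shorter, self-contained argument with no external citations needed.
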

\begin{proof}
Since $S$ is semialgebraic we can write $S$ as a finite union of non-empty semialgebraic sets $H_i$: $S=\cup_{i=1}^m H_i$ where each $H_i$ is of the form:
$$H_i=\{x \in \RR^k \,\,\mid \,\, p_j(x)>0, \,\, h_\ell(x)=0\}.$$
Define $S'$ as the union of the sets $H_i$ where only inequality constraints are used. It follows that 
$S'$ is open and $\inter (\cl S')=\inter (\cl S)$, and $\cl (\inter S')=\cl(\inter S)$. Therefore it suffice to prove the Lemma for a basic open semialgebraic set $S'$. However in this case we have
$$\inter (\cl S')\subseteq \cl S'= \cl (\inter S').$$
\end{proof}

\begin{theorem}
Let $S_{\leq r}$ be the set of points of $X$-rank at most $r$ in $\RR^{n+1}$. If $r$ is a typical rank, but $r+1$ is not typical, then $\cl \accentset{\circ}{S}_{\leq r}=\RR^{n+1}$.
\end{theorem}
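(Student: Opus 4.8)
The plan is to reduce the statement to the density of $S_{\leq r}$ in $\RR^{n+1}$ and then invoke Lemma \ref{lem:intcl} to pass from density of $S_{\leq r}$ to density of its interior $\accentset{\circ}{S}_{\leq r}$. First I would record that, since $X$ is nondegenerate, the cone $\hat X$ spans $\RR^{n+1}$, so every point of $\RR^{n+1}$ has finite $X$-rank, bounded by $n+1$; let $M$ denote the maximal $X$-rank. Then
\[
\RR^{n+1} \;=\; S_{\leq r}\,\cup\,\bigcup_{k=r+1}^{M} S_k ,
\]
a finite union of semialgebraic sets (each $S_k$, and hence $S_{\leq r}$, is semialgebraic), and it suffices to show that the ``tail'' $T:=\bigcup_{k=r+1}^{M} S_k$ has empty interior: for then $\RR^{n+1}\setminus \cl T$ is a dense open set contained in $S_{\leq r}$, so $\cl S_{\leq r}=\RR^{n+1}$.

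The key point — and the only place where Theorem \ref{thm:typical} is used — is that \emph{no} rank $\ge r+1$ can be typical. Indeed, suppose some $r'\ge r+1$ were typical. We cannot have $r'=r+1$ by hypothesis; and if $r'>r+1$, then $r+1$ lies strictly between the typical ranks $r$ and $r'$, so Theorem \ref{thm:typical} would force $r+1$ to be typical, a contradiction. Hence each $S_k$ with $k\ge r+1$ has empty interior; being semialgebraic, this means $\dim S_k<n+1$. Since dimension is additive over finite unions and unchanged under closure, $\cl T$ has dimension $<n+1$ and therefore empty interior, which establishes $\cl S_{\leq r}=\RR^{n+1}$.

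To conclude, from $\cl S_{\leq r}=\RR^{n+1}$ we get $\inter(\cl S_{\leq r})=\RR^{n+1}$, so Lemma \ref{lem:intcl} yields
\[
\RR^{n+1}=\inter\bigl(\cl S_{\leq r}\bigr)\subseteq \cl\bigl(\inter S_{\leq r}\bigr)=\cl\,\accentset{\circ}{S}_{\leq r},
\]
as desired. I expect the one genuinely delicate step to be the use of Theorem \ref{thm:typical} to eliminate \emph{all} ranks above $r$ at once (rather than merely $r+1$); once that is in hand, what remains is the routine semialgebraic fact that a finite union of sets of dimension $<n+1$ has dense complement, together with Lemma \ref{lem:intcl}.
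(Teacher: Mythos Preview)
Your argument is circular. In the paper, this theorem is precisely the tool used to establish Theorem~\ref{thm:typical}: the sentence immediately following its proof reads ``Theorem~\ref{thm:typical} now follows immediately.'' Your ``key point'' invokes Theorem~\ref{thm:typical} to rule out all typical ranks above $r$, but Theorem~\ref{thm:typical} has not yet been proved at this stage --- it is a \emph{consequence} of the very statement you are trying to establish. Without Theorem~\ref{thm:typical} in hand, the hypothesis only tells you that $r+1$ is not typical; it says nothing about $r+2, r+3,\dots$, so your tail $T=\bigcup_{k\ge r+1} S_k$ could a priori contain some $S_k$ with nonempty interior, and the density argument collapses.

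The paper's proof avoids this by working directly with the closed set $A=\cl\accentset{\circ}{S}_{\leq r}$ and bootstrapping from the \emph{single} assumption that $r+1$ is not typical. That assumption gives $\accentset{\circ}{S}_{\leq r+1}\subseteq \bar S_{\leq r}$, and Lemma~\ref{lem:intcl} upgrades this to $\accentset{\circ}{S}_{\leq r+1}\subseteq A$. The crucial step is then a stability property of $A$: if $p\in A$ and $x\in\hat X$, then $p+x$ is a limit of points of $\accentset{\circ}{S}_{\leq r}+x\subseteq \accentset{\circ}{S}_{\leq r+1}\subseteq A$, so $p+x\in A$. Iterating, $A$ is closed under adding arbitrary finite sums from $\hat X$; since $\hat X$ spans $\RR^{n+1}$ and $A$ is nonempty (because $r$ is typical), this forces $A=\RR^{n+1}$. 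This inductive absorption argument is exactly what lets one climb from ``$r+1$ not typical'' to the full conclusion without presupposing Theorem~\ref{thm:typical}.
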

\begin{proof}
 Since $r+1$ is not a typical rank it follows that $\accentset{\circ}{S}_{\leq r+1} \subseteq \bar{S}_{\leq r}$, otherwise there exists a point of rank $r+1$ in the interior of ${S}_{\leq r+1}$ which is not in the closure of $ \bar{S}_{\leq r}$, which certifies that $r+1$ is a typical rank.

 Therefore $\accentset{\circ}{S}_{\leq r+1} \subseteq \inter \bar{S}_{\leq r}$.  Since $r$ is a typical rank we see that $\accentset{\circ}{S}_{\leq r}$ is non-empty. Let $A=\cl \accentset{\circ}{S}_{\leq r}$. The set $S_{\leq r}$ is semialgebraic and by Lemma 
\ref{lem:intcl} we know that $$\accentset{\circ}{S}_{\leq r+1} \subseteq A.$$

Now consider $p \in A$. We know that there exist $p_i \in \accentset{\circ}{S}_{\leq r}$ such that $p_i$ converge to $p$. For all $x\in \hat{X}$ and for all $i$ we know that $p_i+ x \in \accentset{\circ}{S}_{\leq r+1}$. Therefore it follows that $p+x \in A$ for all $x\in \hat{X}$. Then proceeding by induction for all $k \in \NN$, and for all $x_i \in \hat{X}$ we have $$p+\sum_{i=1}^k x_i \in A.$$ But then it follows that $A=\RR^{n+1}$ as desired.
\end{proof}

Theorem \ref{thm:typical} now follows immediately. We recall that the generic rank for symmetric tensors in $\mathbb{P}(S^d\mathbb{C}^{n+1})$
has been computed by Alexander-Hirschowitz in  \cite{AH}. It is equal to $n+1$ for $d=2$ and
to $\lceil\frac{{{n+d}\choose d}}{n+1}\rceil$ for $d\ge 3$, unless
$(d,n)=(4,2), (4, 3), (4,4), (3,4)$, when the previous bound needs to be increased by one. We call this value
the Alexander-Hirschowitz generic rank.
By combining Theorem \ref{thm:typical} with \cite[Theorem 2]{BT}, we get
\begin{cor}\label{CorGreg}
For real symmetric tensors, every rank between the Alexander-Hirschowitz (complex) generic rank and the top typical real rank is also typical.
\end{cor}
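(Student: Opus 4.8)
The plan is to obtain Corollary~\ref{CorGreg} by combining Theorem~\ref{thm:typical} with the already-quoted \cite[Theorem 2]{BT}, with essentially no extra work. Fix $d\ge 2$ and $n\ge 1$ and take $X=v_d(\PP^n_\RR)\subset\PP^N_\RR$ to be the $d$-th Veronese variety, so that $X$-rank is exactly symmetric tensor rank in $\mathbb{P}(S^d\RR^{n+1})$; its complexification $X_\CC=v_d(\PP^n_\CC)$ is the complex Veronese variety, whose generic $\C$-rank is by definition the Alexander--Hirschowitz generic rank $g$ recalled above. I would first record that $X$ is irreducible, nondegenerate, and has a Zariski-dense set of real points (it is the image of $\PP^n$ under a morphism defined over $\RR$), so that the hypotheses of \cite[Theorem 2]{BT} are met.

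Next I would apply \cite[Theorem 2]{BT}: because $X$ is irreducible with dense real points, the smallest typical $X$-rank over $\RR$ coincides with the generic $X_\CC$-rank over $\C$, i.e.\ with $g$. In particular $g$ is itself a typical real rank, and it is the lowest typical rank in the sense of Theorem~\ref{thm:typical}.

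Finally I would feed this into Theorem~\ref{thm:typical} applied to $X$: every rank between the lowest typical $X$-rank and the highest typical $X$-rank is again typical. Substituting the identification from the previous step, every rank between $g$ and the top typical real rank is typical, which is exactly the assertion of Corollary~\ref{CorGreg}.

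I do not expect any genuine obstacle here: all the content lives in Theorem~\ref{thm:typical} and in \cite{BT}, and the proof is a two-line composition of these two facts. The only point deserving a word of justification is the verification of the hypothesis of \cite[Theorem 2]{BT} — irreducibility of the Veronese variety together with density of its real points — which is immediate. I would also note in passing that the same argument yields, verbatim, the analogous statement for any irreducible real projective variety $X$ with dense real points, with $g$ replaced by the generic $X_\CC$-rank.
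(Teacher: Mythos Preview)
Your proposal is correct and follows exactly the same approach as the paper: the corollary is obtained by combining Theorem~\ref{thm:typical} (applied to the Veronese variety) with \cite[Theorem 2]{BT}, which identifies the lowest typical real rank with the Alexander--Hirschowitz generic rank. The paper states this combination without further elaboration, and your added verification of the hypotheses of \cite{BT} is appropriate.
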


\section{Ternary and quaternary cubics}\label{v3P2}

In this section we compute the typical ranks for points $x\in \mathbb{P}(S^3\mathbb{R}^{n+1})$ with respect to the Veronese variety $X=v_3(\P n_{\mathbb{R}})$
for $n=2$ and $3$. The following result is classical and goes back essentially to a result by De Paolis published in 1886,
see \cite[libro III, \S I.9]{MR966664}. It has been reviewed recently by Banchi in \cite{Ba}.

\begin{theorem}\label{thm:typcubic}[De Paolis]
For real plane cubics there is only one typical rank, which is $4$.
\end{theorem}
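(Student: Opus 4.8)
The plan is to show two things: (i) the generic complex rank of a plane cubic is $4$, so that by \cite{BT} the smallest typical real rank is $4$; and (ii) a real cubic of rank $4$ has an open neighborhood of cubics all of rank $4$, and moreover $4$ is in fact the \emph{only} typical rank, i.e. the set of cubics of rank $\leq 4$ has full-dimensional interior and the complement (cubics of rank $\geq 5$) has empty interior. For (i), the Alexander--Hirschowitz formula recalled above gives generic rank $\lceil \binom{5}{2}/3 \rceil = \lceil 10/3 \rceil = 4$ for $(d,n)=(3,2)$, which is not one of the exceptional cases; hence the generic rank over $\CC$ is $4$ and, by \cite[Theorem 2]{BT}, $4$ is a typical real rank and is the smallest one.

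For (ii) the key classical input is that a general real plane cubic $f$ can be written, over $\RR$, as a sum of $4$ real cubes of linear forms. I would argue this via apolarity: the apolar ideal $f^{\perp}$ is a Gorenstein ideal with Hilbert function $(1,3,3,1)$, and its degree-$2$ part is a $3$-dimensional space (a net) of conics; a general such net contains a smooth conic, and on a smooth real conic with real points one finds a reduced apolar set of $4$ real points in general position, giving a real rank-$4$ decomposition (De Paolis's construction, as reviewed by Banchi \cite{Ba}). The set of $f$ for which this construction works is Euclidean-open and dense, so $S_{\leq 4}$ has nonempty interior: $4$ is typical. To see that no rank $\geq 5$ is typical, it suffices to check that $r=5$ is not typical; by Theorem~\ref{thm:typical} (or directly by the preceding Theorem) any typical rank lies between $4$ and the top typical rank, so it is enough to rule out $5$ being typical, equivalently to show $\cl\, \accentset{\circ}{S}_{\leq 4} = \RR^{10}$, i.e. that the rank-$4$ locus is dense. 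This follows because the complex generic rank is $4$: the complement of $S_{\leq 4}$ in $\PP^9_\RR$ is contained in the real points of a proper complex subvariety (the complex rank-$\leq 3$ secant variety defect locus is lower-dimensional, and the rank-$4$ locus is Zariski-dense over $\CC$), hence has empty Euclidean interior.

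Concretely, I would structure the write-up as follows. First, invoke Alexander--Hirschowitz for the generic complex rank $4$ and \cite{BT} for "$4$ is the smallest typical rank." Second, recall the apolarity/Gorenstein description of $f^\perp$ for a general ternary cubic and De Paolis's explicit real rank-$4$ decomposition via a smooth apolar conic (citing \cite{Ba}, \cite{MR966664}); conclude $S_{\leq 4}$ contains a Euclidean-dense open set, so $4$ is typical. Third, observe that since $S_{\leq 4}$ is dense and semialgebraic, $\inter(\cl S_{\leq 4}) = \PP^9_\RR$, so by the dichotomy in the preceding Theorem (with $r=4$) there is no typical rank strictly above $4$, hence $4$ is the unique typical rank.

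The main obstacle is the real-rank-$4$ decomposition of a \emph{general} real cubic: over $\CC$ apolarity immediately yields $4$ general apolar points, but over $\RR$ one must ensure the net of apolar conics actually contains a \emph{smooth conic with a real point} and that one can pick $4$ real apolar points on it in linearly general position — this is exactly the content of De Paolis's argument and the genericity statement has to be handled with some care (e.g. by exhibiting one explicit real cubic of rank $4$ and noting the condition is open). Everything else is bookkeeping around the semialgebraic topology already set up in Lemma~\ref{lem:intcl} and the Theorem preceding this one.
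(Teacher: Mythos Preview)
Your plan has the right architecture (show $4$ is the smallest typical rank via \cite{BT}, then show the real rank-$4$ locus is Euclidean-dense so nothing above $4$ is typical), but the argument you give for density contains a genuine error. You write that ``the complement of $S_{\leq 4}$ in $\PP^9_\RR$ is contained in the real points of a proper complex subvariety \ldots\ hence has empty Euclidean interior,'' deducing this from the complex generic rank being $4$. This inference is exactly what fails in general and is the entire reason typical ranks are interesting: a real point can have complex rank $4$ yet real rank $>4$, so the complement of the \emph{real} $S_{\leq 4}$ need not sit inside any proper complex subvariety. (Compare binary quintics: complex generic rank $3$, but ranks $4$ and $5$ are typical over $\RR$.) So this sentence proves nothing, and without it your density claim rests solely on the apolarity sketch.

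That apolarity sketch is also not right as written. For a general ternary cubic the degree-$2$ part of $f^\perp$ is a net of conics, and a rank-$4$ apolar scheme arises as the base locus of a \emph{pencil} in that net (the intersection of two apolar conics), not as ``$4$ real points on a smooth conic in the net.'' The real issue is then whether one can choose a pencil whose four base points are all real, and this is exactly the nontrivial real content you flag as ``the main obstacle'' --- you have identified the obstacle correctly but not supplied an argument for it.

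The paper bypasses all of this with a one-line computation: every general real plane cubic is $SL(3)$-equivalent to the Hesse form $f_\lambda = x^3+y^3+z^3+6\lambda xyz$ for some real $\lambda$, and for $\lambda\neq -\tfrac12$ one writes down an explicit real rank-$4$ decomposition (four cubes of real linear forms with real coefficients depending rationally on $\lambda$). This immediately gives density of the real rank-$4$ locus and finishes the proof. Your route via apolarity could in principle be completed (this is essentially what Banchi \cite{Ba} does), but the Hesse-form argument is shorter and avoids the real-geometry subtleties you would otherwise have to address.
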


\begin{proof} The general plane cubic is $SL(3)$-equivalent to
 Hasse form $f_{\lambda}=x^3+y^3+z^3+6\lambda xyz$, for some $\lambda\in\RR$.

For $\lambda\neq -\frac 12$ we have the rank $4$ decomposition

$$f_{\lambda}=c_0(x+y+z)^3+c_1((1+\lambda) x-\lambda y-\lambda z)^3+c_2(-\lambda x+(1+\lambda) y-\lambda z)^3+c_3(-\lambda x-\lambda y+(1+\lambda)z)^3$$

where $c_0=\frac{\lambda(\lambda^2+\lambda+1)}{(2\lambda+1)^2}$ and  $c_i=\frac{1}{(2\lambda+1)^2}$ for $i=1,\ldots ,3$. For details on how this decomposition can be found we refer to \cite{Ba}.
\end{proof}

Let $V=\R^4$ and consider a general $\phi\in S^3V$. Over $\C$
the generic rank is $5$ and moreover Sylvester pentahedral theorem asserts that the decomposition is unique, \cite{MR966664}.

Consider indeed the catalecticant morphism
$$B_{\phi}\colon V^{\vee}\to S^2V$$
obtained by contraction with $\phi$.

Since the degree of subvariety of $S^2V$ given by rank $\le 2$ matrices is $10$, there are $10$ points $v_i$ such that $\textrm{rank}(B_{\phi}(v_i))\le 2$.
If $\phi=\sum_{i=1}^5l_i^3$ then the $10$ points $Q_{ijk}=\{l_i=l_j=l_k=0\}$
satisfy $\textrm{rank}(B_{\phi}(Q_{ijk}))\le 2$. This shows that the decomposition is unique and gives 
an algorithm to compute it.

Since $\phi=\overline\phi$, we get that there are three possibilities (up to reordering), all occurring for $\phi$ lying in a set of positive volume:
\begin{enumerate}[(i)]
\item\label{4.i} all $l_i$ are real, in this case all ten points $Q_{ijk}$ are real;

\item\label{4.ii} $l_1,l_2,l_3$ are real, $\overline l_5=l_4$, in this case exactly four among the points $Q_{ijk}$ are real;

\item\label{4.iii} $l_1$ is real, $\overline l_3=l_2$, $\overline l_5=l_4$, in this case exactly two among the points $Q_{ijk}$ are real.
\end{enumerate}
The following equality is straightforward:
\begin{lemma}\label{c2r3} 
$$(x+iy)^3+(x-iy)^3=-(x+y)^3-(x-y)^3+4x^3.$$
\end{lemma}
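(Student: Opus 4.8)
The plan is to verify this as a formal polynomial identity in $x,y$ (equivalently in $\C[x,y]$, or with $x,y$ replaced by any two commuting linear forms), since both sides are cubic forms and equality of cubics can be checked coefficientwise. I would carry it out by direct expansion via the binomial theorem. Writing
$$(x+iy)^3 = x^3 + 3ix^2y - 3xy^2 - iy^3, \qquad (x-iy)^3 = x^3 - 3ix^2y - 3xy^2 + iy^3,$$
the imaginary terms cancel upon adding, and the left-hand side equals $2x^3 - 6xy^2$. Likewise $(x+y)^3 + (x-y)^3 = 2x^3 + 6xy^2$, so the right-hand side is $-(2x^3 + 6xy^2) + 4x^3 = 2x^3 - 6xy^2$, and the two sides agree.

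A slightly more conceptual route, which avoids writing out all the monomials, is to use the factorization $p^3 + q^3 = (p+q)^3 - 3pq\,(p+q)$. For the left-hand pair, $p+q = (x+iy)+(x-iy) = 2x$ and $pq = x^2+y^2$, giving $8x^3 - 6x(x^2+y^2)$; for the right-hand pair, $p+q = 2x$ again and $pq = x^2-y^2$, giving $8x^3 - 6x(x^2-y^2)$. Subtracting and adding $4x^3$ reduces everything to the single cancellation $-6x(x^2+y^2) + 6x(x^2-y^2) + 4x^3 - (\text{nothing}) $; more precisely the claimed identity becomes $8x^3 - 6x(x^2+y^2) = -\bigl(8x^3 - 6x(x^2-y^2)\bigr) + 4x^3$, i.e. $2x^3 - 6xy^2 = 2x^3 - 6xy^2$.

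There is essentially no obstacle here: the statement is an elementary algebraic identity, and the only thing to be careful about is the bookkeeping of the powers of $i$ and of the signs in the two binomial expansions. (The point of isolating the lemma is its use later: it rewrites a conjugate pair of complex cubes $\ell^3 + \bar\ell^{\,3}$ as a real combination of three real cubes, which is exactly what is needed to pass from a complex Waring decomposition with conjugate summands to a real one and thereby control the real rank.)
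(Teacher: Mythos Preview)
Your proof is correct; both the direct binomial expansion and the $p^3+q^3=(p+q)^3-3pq(p+q)$ route cleanly yield $2x^3-6xy^2$ on each side. The paper itself offers no proof beyond declaring the identity ``straightforward,'' so your verification is exactly in the spirit of what the authors intend.
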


{The uniqueness shows immediately that in the cases (\ref{4.ii}) and (\ref{4.iii}) above the rank is $\ge 6$.}

\begin{prop}\label{prop:cubic}
In case (\ref{4.i}) the rank is $5$, which is typical for cubic surfaces in
$\mathbb{P}(S^3\mathbb{R}^{4})$.
In the cases (\ref{4.ii}) and (\ref{4.iii}) the rank is $6$, which is again typical.
\end{prop}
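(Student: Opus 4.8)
The plan is to treat the three cases in turn, obtaining the lower bounds from the uniqueness of the Sylvester pentad and the upper bounds from explicit real decompositions built out of \lemref{c2r3}. In case (\ref{4.i}) all $l_i$ are real, so $\phi=\sum_{i=1}^{5}l_i^3$ is already a real decomposition and the real rank is at most $5$; since case (\ref{4.i}) is open, a general $\phi$ in it has complex rank equal to the generic value $5$, and the real rank is never below the complex rank, so the real rank equals $5$. In case (\ref{4.ii}), with $l_1,l_2,l_3$ real and $l_5=\overline{l_4}$, writing $l_4=x+iy$ with $x,y$ real linear forms and applying \lemref{c2r3} gives
\[
\phi=l_1^3+l_2^3+l_3^3-(x+y)^3-(x-y)^3+4x^3 ,
\]
a sum of six real cubes, so the real rank is at most $6$; together with the bound $\ge 6$ already noted it equals $6$. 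As cases (\ref{4.i}) and (\ref{4.ii}) each have positive volume, $5$ and $6$ are typical.

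The subtle point is the upper bound in case (\ref{4.iii}), where $l_1$ is real, $l_3=\overline{l_2}$ and $l_5=\overline{l_4}$. Applying \lemref{c2r3} separately to the two conjugate pairs only yields a decomposition into seven real cubes: $l_1^3$ together with two triples lying, respectively, in the complementary planes $W_1=\langle\operatorname{Re}l_2,\operatorname{Im}l_2\rangle$ and $W_2=\langle\operatorname{Re}l_4,\operatorname{Im}l_4\rangle$, and these triples cannot be merged. To reach the sharp value $6$ I would argue as follows. For general $\phi$ one has $V=W_1\oplus W_2$; write $l_1=p+q$ with $p\in W_1$, $q\in W_2$. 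The binary cubic $\mu_1:=l_2^3+\overline{l_2}^3\in S^3W_1$ has three distinct real roots, hence real rank $3$, and for general $l_1$ it admits a real rank-$3$ decomposition with $[p]$ among its three nodes,
\[
\mu_1=c_1p^3+c_2P_2^3+c_3P_3^3 ,\qquad c_1\neq 0 ,
\]
with such decompositions forming a one-parameter family (requiring $[p]$ to be a node is one condition on the two-dimensional family of rank-$3$ decompositions, and a dimension count on the incidence ``$[p]$ is a node of the apolar cubic $g$'' shows it can be met). Do the same for $\mu_2:=l_4^3+\overline{l_4}^3$ using $[q]$: $\mu_2=c_4q^3+c_5R_2^3+c_6R_3^3$ with $c_4\neq 0$.

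Substituting, $\phi=\bigl(l_1^3+c_1p^3+c_4q^3\bigr)+c_2P_2^3+c_3P_3^3+c_5R_2^3+c_6R_3^3$. The bracketed term is a binary cubic in $\langle p,q\rangle$ with coefficient vector $(1+c_1,3,3,1+c_4)$ in the basis $p,q$ (so that $l_1=p+q$), and a direct computation gives its discriminant as an explicit polynomial in $(c_1,c_4)$ which is negative for general nonzero $c_1,c_4$ (the locus where it is $\ge 0$ is a small region around $(c_1,c_4)=(0,0)$). Choosing $c_1,c_4$ outside that locus -- which the remaining one-parameter freedom in each permits -- makes the bracketed cubic have a single real root, hence real rank at most $2$, so $\phi$ has real rank at most $2+4=6$, and therefore exactly $6$. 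The main obstacle I foresee is making these genericity statements precise: that for general $l_1$ the cubics $\mu_1,\mu_2$ admit the node decompositions at $[p]$ and $[q]$, and that the remaining freedom forces the leftover binary cubic into the rank-$\le 2$ locus, both on an open dense subset of case (\ref{4.iii}). Granting this, the real rank is $6$ on an open subset of case (\ref{4.iii}), so $6$ is typical; and since cases (\ref{4.i})--(\ref{4.iii}) cover $\PP(S^3\RR^4)$ up to measure zero with real rank at most $6$ throughout, $5$ and $6$ are the only typical ranks, which also yields \thmref{main}(\ref{(2)}).
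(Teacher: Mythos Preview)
Your handling of cases (\ref{4.i}) and (\ref{4.ii}) is correct and coincides with the paper's argument. For case (\ref{4.iii}) your route diverges from the paper's and carries a genuine gap.

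The paper's argument for (\ref{4.iii}) is much shorter and avoids any delicate bookkeeping. Take the real point $z=Q_{123}$, where $l_1,l_2,l_3$ vanish. The polar $P_z(\phi)$ then receives contributions only from $l_4,l_5$ and is a real indefinite quadratic form of rank $2$, hence $P_z(\phi)=a^2-b^2$ for suitable real linear forms $a,b$. Setting $\psi=\bigl(3P_z(a)\bigr)^{-1}a^3-\bigl(3P_z(b)\bigr)^{-1}b^3$ one gets $P_z(\psi)=a^2-b^2=P_z(\phi)$, so $\phi-\psi$ has vanishing polar along $z$ and is therefore a cubic in only three variables. By Theorem~\ref{thm:typcubic} such a ternary cubic generically has rank $4$, so $\phi$ has rank at most $2+4=6$, hence exactly $6$.

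Your approach tries instead to peel off a cube $c_1p^3$ (resp.\ $c_4q^3$) from each conjugate pair and absorb it into the $l_1$-summand. The idea is not unreasonable, but your key discriminant claim is false: with $u=1+c_1$, $v=1+c_4$ the discriminant of $up^3+3p^2q+3pq^2+vq^3$ equals $-27u^2v^2+162uv-108(u+v)+81$, and this is \emph{positive} for instance at $u=v=-1$ (that is, $c_1=c_4=-2$), and indeed along the entire unbounded arc $uv=3$, $u,v<0$. So the nonnegativity locus is not ``a small region around $(0,0)$''. More seriously, the admissible values of $c_1$ (those for which $\mu_1-c_1p^3$ drops to real rank $\le 2$) depend on $p$, and you have not shown that this set meets the region where the merged cubic has rank $\le 2$, compatibly with the analogous constraint on $c_4$. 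One can in fact argue that taking $c_1,c_4$ large and positive makes all three conditions hold for generic $p,q$, so your strategy is salvageable; but as written it is incomplete and rests on the incorrect assertion above. The paper's polar trick sidesteps all of this in two lines.
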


\begin{proof}
In the case (\ref{4.ii}),
by applying Lemma \ref{c2r3}, it follows that the rank is $6$, which is typical.
{If $z=(z_0,z_1,z_2)$, denote by $P_z(\phi)$ the polar
$\sum_{i=0}^2z_i\frac{\partial \phi}{\partial z_i}$.}
In the case (\ref{4.iii}), let {$z=Q_{123}$ be one of the two points $Q_{ijk}$ which are real
(the other one is $Q_{145}$).  }
Then $P_z(\phi)=(l_4)^2+(l_5)^2$, after multiplying $l_4$ and $l_5$ by a suitable scalar.
Let $l_4=a+ib$ and $l_5=a-ib$ be the real decompositions.
Then $P_z(\phi)=a^2-b^2$. Let $\psi=\left(3P_z(a)\right)^{-1}a^3-\left(3P_z(b)\right)^{-1}b^3$,
then $P_z(\phi-\psi)=0$, it follows that $\phi-\psi$ can be expressed as a cubic form in just three variables,
hence it has border rank $\le 4$, by Theorem \ref{thm:typcubic}. In particular $\phi=\psi+(\phi-\psi)$ has border rank $\le 2+4=6$, 
and $6$ is again a typical rank. This concludes the proof. 
\end{proof}

\medskip
(1) of Theorem \ref{main} follows from Theorem \ref{thm:typcubic}. 
(2) of Theorem \ref{main} follows from Proposition \ref{prop:cubic}.

\section{Ternary Quartics}\label{quartics}

In this section we estimate the typical ranks for points $x\in \mathbb{P}(S^4\mathbb{R}^3)$ with respect to the Veronese variety $X=v_4(\P 2_{\mathbb{R}})$.

\begin{prop}\label{prop:typquartic} Typical ranks for ternary quartics are between $6$ and $8$.
\end{prop}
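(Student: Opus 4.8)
The plan is to treat the two bounds separately. For the lower bound, recall from \cite{BT} that for an irreducible real variety the smallest typical rank equals the generic complex rank of the complexification, and that by Alexander--Hirschowitz \cite{AH} the generic complex rank of ternary quartics is $\lceil 15/3\rceil+1=6$, the $+1$ being forced by the exceptional pair $(d,n)=(4,2)$. Hence every typical rank of a ternary quartic is at least $6$, and $6$ is itself typical.

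For the upper bound it suffices to exhibit a dense subset of $\PP(S^4\RR^3)$ on which the real rank is at most $8$, since then the locus of rank $\ge 9$ has empty interior and no such rank can be typical. Fix a real splitting $\RR^3=\langle x\rangle\oplus\langle y,z\rangle$ and write a quartic $f$ as $f=\sum_{j=0}^4 x^j g_j(y,z)$ with $g_j$ binary of degree $4-j$. The idea is to find real linear forms $\ell_1,\dots,\ell_4$ and real scalars $\lambda_1,\dots,\lambda_4$ so that the part of $\sum_i\lambda_i\ell_i^4$ divisible by $x$ equals $\sum_{j\ge 1}x^j g_j$; then $f-\sum_i\lambda_i\ell_i^4=g_0-\sum_i\lambda_i(\ell_i|_{x=0})^4$ is a binary quartic, hence of real rank at most $4$ (every real binary quartic has real rank $\le 4$; cf.\ \cite{CO}), and $\rank_\RR(f)\le 4+4=8$.

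The existence of such $\ell_i$ is, at a general point, a dimension count: one wants the addition map $\{\text{real rank}\le 4\}\times S^4\langle y,z\rangle\to S^4\RR^3$ to be submersive at a general point. By Terracini's lemma its differential at $v=\sum_i\lambda_i\ell_i^4$ has image $\sum_i\ell_i^3\cdot\RR^3+S^4\langle y,z\rangle$, and a nonzero linear functional annihilating this subspace corresponds, under apolarity, to a nonzero quartic $x\cdot C$ (with $C$ a cubic form) whose curve $\{x=0\}\cup\{C=0\}$ is singular at all four points $[\ell_i]$. Choosing the $\ell_i$ in general position (no three collinear, none on $\{x=0\}$), each $[\ell_i]$ must then be a singular point of $C$; but a plane cubic either has at most three singular points, or contains a multiple line and then its whole singular locus lies on a line --- contradicting general position in either case. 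So the addition map is submersive at general points and the rank-$\le 8$ locus has nonempty interior.

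The main obstacle is upgrading ``nonempty interior'' to ``dense'': submersivity only guarantees that the image contains a Euclidean-open set, and over $\RR$ the affine $5$-plane $f+S^4\langle y,z\rangle$ may fail to meet the $12$-dimensional semialgebraic locus $\{\text{real rank}\le 4\}$ even when the complex intersection is non-empty. Closing this reality gap is where the work lies: one should use that the splitting $\langle y,z\rangle$ may be chosen after seeing $f$ (and that the real Grassmannian of such $2$-planes is compact and connected), that real $v_4$-rank allows powers of either sign, and a semialgebraic analysis of the boundary of the image --- or, alternatively, argue via the threefold $\mathrm{VSP}(f,6)$ of complex rank-$6$ decompositions of a general real $f$, exhibiting one that is invariant under conjugation and has at least two real linear forms: each conjugate pair $\ell^4+\bar\ell^4$ is then a binary quartic with four distinct real roots, of real rank $3$, giving total $\le 2+2\cdot3=8$. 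The remaining details are routine.
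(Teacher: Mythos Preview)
Your lower bound is fine and matches the paper. The gap is in the upper bound, and you have in fact named it yourself: your Terracini argument shows only that the rank-$\le 8$ locus has \emph{nonempty interior}, not that it is dense, and the ``remaining details'' you wave at are not routine. Neither the freedom to move the $2$-plane $\langle y,z\rangle$ nor the VSP suggestion is developed into an actual argument, and over $\RR$ the reality gap you describe is a real obstruction that does not close by itself. As written, the proof does not establish that no rank $\ge 9$ is typical.

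The paper closes this gap with a single clean idea that replaces your dimension count by an explicit construction. Fix a generic $Q\in\PP^2$ and consider the polar $P_Q(f)\in S^3\RR^3$, a ternary cubic. Earlier in the paper (\thmref{thm:typcubic}) it is shown that $4$ is the \emph{unique} typical real rank for ternary cubics; hence for $f$ in a dense open set one has a real decomposition $P_Q(f)=\sum_{i=1}^4 l_i^3$. Provided $Q\notin l_i$ (a generic condition), set $g=f-\sum_{i=1}^4\bigl(4P_Q(l_i)\bigr)^{-1}l_i^4$; then $P_Q(g)=0$, so $g$ is a binary quartic and has real rank $\le 4$. Thus $\rank_\RR(f)\le 4+4=8$ on a dense open set, and no rank above $8$ is typical. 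The point you were missing is that the $\ell_i$ can be produced \emph{constructively} from a rank decomposition of the polar cubic, and the uniqueness of the typical rank for cubics is exactly what makes this work on a dense set rather than merely on an open one.
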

\begin{proof}
A generic complex ternary quartic has rank $6$, therefore $6$ is the smallest typical rank for real ternary quartics,
by \cite[Theorem2]{BT}.
For a generic $f\in S^4\mathbb{R}^3$ and a generic $Q\in\PP^2$ we have that  $P_Q(f)\in S^3\mathbb{R}^3$ and  $rk(P_Q(f))=4$ (by Theorem \ref{thm:typcubic}),
say $P_Q(f)=\sum_{i=1}^4l_i^3$. If $Q\notin l_i$,  then there exists $g\in S^4V$  
 such that 
$$f=\sum_{i=1}^4\left(4P_Q(l_i)\right)^{-1} l_i^4+g$$ and $P_Q(g)=0$.
Since  $g$ is a binary form  (it may be written as a polynomial in only two variables) and since for $S^3\RR^2$ we have two typical ranks $r=3,4$ (see \cite{CO,B}),
then the typical ranks of ternary quartics are $\le 4+4=8$.
\end{proof}

\begin{remark}
Let $f\in S^4\mathbb{R}^3$. If there exists $Q$ such that $P_Q(f)=\sum_{i=1}^3l_i^3$ has real rank three (so it is $SL(3)$-equivalent to a real Fermat cubic curve),
then the argument of the proof of Proposition \ref{prop:typquartic} shows that $f$ has rank $\le 7$.
Hence for a large part of $S^4\mathbb{R}^3$ the rank is $\le 7$. The following example shows that this argument cannot be applied
to a dense subset of ternary quartics.
\end{remark}

\begin{example} Let
$f=x^4+y^4+z^4-5(x^2y^2+x^2z^2+y^2z^2)$ (its corresponding real plane curve has three ovals).
The equation of the hypersurface which is the Zariski closure of rank three cubics is a classical invariant $S$ of degree $4$,
called the Aronhold invariant, see \cite{O} for a pfaffian formula that makes it easy to compute. We claim that there is no $Q\in \PP^2$ 
such that $S(P_Q(f))=0$ where $S$ is the Aronhold invariant, and this remains true for every small perturbation of $f$.
Indeed $S(f)=\{Q|S(P_Q(f))=0\}$ is again a quartic curve in $Q=(x,y,z)$, with equation  
$25(x^4+y^4+z^4)+71(x^2y^2+x^2z^2+y^2z^2)=0$, which has no real points and lies in the interior of nonnegative quartics. $S(f)$ is equipped with a theta characteristic $\theta$ and $f$ was called the Scorza quartic of $(S(f),\theta)$ in \cite{DoKa}. 
\end{example}

We will present three different ways of showing that $7$ is a typical rank for ternary quartics. The first one is based on the topology of the real variety of the ternary quartic, the second one is based on the signature of the middle catalecticant, and the third one is based on the variety of decompositions of a generic complex ternary quartic as a sum of six $4$-th powers.

\subsection{Topology, Middle Catalecticant and Real Rank}

We observe some interplay between the topology of the real curve $V_\RR(f)$ defined by the ternary quartic $f$, the signature of its middle catalecticant $M_f$ and the real rank of $f$.

\begin{prop}\label{prop:line}
Let $f$ be a ternary quartic such that there exists a real line $L$ intersecting $V_\RR(f)$ in $4$ real points counting multiplicity, at least $2$ of which are distinct. Then any decomposition of $f$ as a sum of $4$-th powers must include $2$ powers with a positive sign and $2$ powers with a negative sign.

\end{prop}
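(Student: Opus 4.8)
The plan is to restrict the quartic $f$ to the line $L$ and analyze the sign pattern forced on any power sum decomposition by evaluating it along $L$. Suppose $f = \sum_{i=1}^k \epsilon_i \ell_i^4$ with each $\epsilon_i \in \{+1,-1\}$ and each $\ell_i$ a real linear form. Parametrize $L$ by a real parameter $t$, i.e. pick a real parametrization $\gamma(t)$ of the line, and set $f|_L(t) = f(\gamma(t))$, a univariate real polynomial of degree at most $4$. Each $\ell_i$ restricts to an affine function $a_i t + b_i$ on $L$, so $f|_L(t) = \sum_i \epsilon_i (a_i t + b_i)^4$. The hypothesis says that $f|_L$ has $4$ real roots counting multiplicity (these are the $4$ intersection points of $L$ with $V_\RR(f)$), and at least $2$ are distinct, so $f|_L$ is not identically zero, is a genuine quartic (degree exactly $4$, since it has $4$ real roots), and changes sign at least twice as $t$ ranges over $\RR$ — more precisely, between consecutive distinct roots (ordered on the line) the sign alternates, and since the polynomial has even degree $4$, having $4$ real roots (with multiplicity) forces the leading coefficient's sign to be achieved at both ends $t \to \pm\infty$, while in the interior $f|_L$ must take the opposite sign somewhere (on at least one interval between roots of odd total multiplicity). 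So $f|_L$ takes both positive and negative values on $\RR$.

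Now I claim this forces both a positive and a negative $\epsilon_i$. Indeed, if all $\epsilon_i = +1$ then $f|_L(t) = \sum_i (a_i t + b_i)^4 \ge 0$ for all real $t$, contradicting that $f|_L$ takes a negative value; symmetrically, if all $\epsilon_i = -1$ then $f|_L \le 0$ everywhere, contradicting that it takes a positive value. Hence in any real power sum decomposition of $f$ there is at least one $+$ and at least one $-$. To upgrade "at least one of each sign" to "at least $2$ of each sign" in the case of a rank-$4$ decomposition, I would argue as follows. Suppose $k = 4$ and, say, exactly one sign is negative: $f = \ell_1^4 + \ell_2^4 + \ell_3^4 - \ell_4^4$ (the case of exactly one positive is symmetric). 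Restricting to $L$: $f|_L = g(t) - (a_4 t + b_4)^4$ where $g = \sum_{i=1}^3 (a_i t + b_i)^4 \ge 0$. Then at any real root $t_0$ of $(a_4 t + b_4)$ — which exists since $a_4, b_4$ are real (if $a_4 = 0$ the form $\ell_4$ is constant on $L$ and a mild separate argument applies) — we get $f|_L(t_0) = g(t_0) \ge 0$, and moreover $f|_L(t) \ge -(a_4 t+b_4)^4$, i.e. $f|_L$ is bounded below by $-(a_4t+b_4)^4$; this does not immediately contradict $4$ real roots, so a finer count is needed.

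The cleaner route for the "$2$ and $2$" conclusion: I expect the statement is really about the total multiplicity pattern, so I would instead count sign changes directly. A real quartic $p(t) = \sum_i \epsilon_i (a_i t + b_i)^4$ with $n_+$ positive and $n_-$ negative signs, $n_+ + n_- = 4$: if $n_- \le 1$, write $p = q - r$ with $q$ a sum of (at most $4$) fourth powers and $r$ a single fourth power $(at+b)^4$. Then $p(t) \ge -r(t)$, so on the set where $r(t) = 0$ — which is the single point $t = -b/a$ (a double root structure) — $p \ge 0$, and elsewhere $p(t) + r(t) = q(t) \ge 0$. A fourth power $(at+b)^4$ has a zero of multiplicity $4$ at one point; for $p = q - r$ to have $4$ real roots with $\ge 2$ distinct, examine behavior: the main obstacle, and the step I'd expect to require the most care, is showing that $q - r$ cannot have $4$ real roots (with the distinctness condition) when $q \ge 0$ is a sum of at most $3$ fourth powers. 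I would handle this by the intermediate value / Rolle-type argument: $p/r = q/r - 1$ where $q/r$ is a ratio of a degree-$\le 4$ nonnegative polynomial over a perfect fourth power, a convex-ish function whose graph crosses the value $1$ at most twice (one would verify $q/r - 1$ has at most $2$ sign changes by bounding the number of critical points of $q/r$, using that $q$ has degree $\le 4$). Since $p$ and $p/r$ have the same real zeros away from $t = -b/a$, this caps the number of distinct real roots at $2$ with controlled multiplicity, contradicting the hypothesis of $4$ real roots counted with multiplicity, at least $2$ distinct — this is where the "at least $2$ distinct" hypothesis is used to exclude the degenerate single-quadruple-root case. This completes the argument that $n_+ = n_- = 2$.
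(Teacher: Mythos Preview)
Your overall strategy---restrict $f$ to the line $L$ and analyze sign patterns of the resulting binary quartic---is exactly what the paper does, and your argument that any decomposition must carry at least one positive and at least one negative sign is correct. The difficulty is the step from ``at least one'' to ``at least two'' of each sign, and here your sketch has a genuine gap.

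First, a minor issue: you write $n_+ + n_- = 4$, but the statement concerns \emph{arbitrary} decompositions, not just length-four ones; this restriction is never justified (and is in fact irrelevant to the argument you go on to sketch, which only uses $\deg q \le 4$).

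The substantive problem is in the final paragraph. You propose to show that if $p = q - r$ with $q \ge 0$ of degree $\le 4$ and $r = (at+b)^4$, then $p$ cannot have four real roots with at least two distinct, by controlling the critical points of $q/r$. But this claim is false as stated: take
\[
p(t) = -t^4 + t^2 = t^2(1-t)(1+t),
\]
which has four real roots (counting multiplicity) with three distinct, yet
\[
p(t) = \underbrace{(15t^4 + t^2)}_{q(t)\,\ge\,0} \;-\; \underbrace{(2t)^4}_{r(t)}.
\]
So any argument that uses only $q \ge 0$ cannot succeed. What saves the actual statement is that $q$ must be a sum of fourth powers with \emph{positive} coefficients, a strictly stronger condition than nonnegativity for binary quartics (indeed $15t^4 + t^2$ is nonnegative but is not such a sum, since it vanishes at $t=0$ without being a multiple of $t^4$). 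Your critical-point analysis never invokes this extra structure, and therefore cannot close the gap; moreover, the derivative count you allude to gives up to three critical points of $q/r$, which does not immediately cap the crossings of level $1$ at two.

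The paper sidesteps all of this by quoting Reznick's result (\cite[Corollary~2.6]{Rez1}) for binary forms: a binary quartic with all real roots that is not a fourth power requires at least two positive signs in any power-sum decomposition (and, by symmetry applied to $-\hat f$, at least two negative signs). That lemma is precisely the missing ingredient, and its proof does use more than mere nonnegativity of the positive part.
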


\begin{proof}
It suffices to show the claim for the number of positive signs, since the proof for the number of negative signs is identical. Consider the restriction $\hat{f}$ of $f$ to $L$. Then $\hat{f}$ is a bivariate form with all real roots, which is not a $4$-th power. It is shown in \cite[Corollary 2.6]{Rez1} that any decomposition of $\hat{f}$ as a sum of $4$-th powers must include two powers with a positive sign. It follows that the same is true for $f$.
\end{proof}

We now prove our first restriction on the rank of $f$ based on the geometry of $V_\RR(f)$ and the signature of the middle catalecticant $M_f$.

\begin{cor}\label{cor:line}
Let $f$ be a ternary quartic such that the signature of $M_f$ is $(5,1)$ and there exists a real line $L$ intersecting $V_\RR(f)$ in $4$ real points counting multiplicity, at least $2$ of which are distinct. Then $\mathrm{rk}_{\RR}(f) \geq 7$.
\end{cor}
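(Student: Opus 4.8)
The goal is to combine the sign restriction from Proposition~\ref{prop:line} with a counting argument about signatures. Suppose $\mathrm{rk}_\RR(f) \le 6$, and write $f = \sum_{i=1}^6 \epsilon_i \ell_i^4$ with $\epsilon_i \in \{+1,-1\}$ and $\ell_i$ real linear forms. By Proposition~\ref{prop:line}, the hypothesis on the line $L$ forces at least $2$ of the $\epsilon_i$ to be positive and at least $2$ to be negative. So among the $6$ signs there are either $(2,4)$, $(3,3)$, or $(4,2)$ positive/negative. The key point is to relate this sign distribution to the signature of the middle catalecticant $M_f$.

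**Key step: catalecticant and signature.** The middle catalecticant $M_f$ is a symmetric bilinear form on $S^2\RR^3$ (a $6\times 6$ symmetric matrix). A single fourth power $\ell^4$ has middle catalecticant equal to a rank-one positive semidefinite form, namely (a positive multiple of) $(\ell^2)(\ell^2)^T$ in the appropriate coordinates on $S^2\RR^3$. Therefore $M_f = \sum_{i=1}^6 \epsilon_i M_{\ell_i^4}$ is a sum of $6$ rank-one symmetric matrices, of which (say) $p$ come with a positive sign and $q = 6-p$ with a negative sign. A standard linear algebra fact (Sylvester-type inertia bound) gives that the number of positive eigenvalues of $M_f$ is at most $p$ and the number of negative eigenvalues is at most $q$. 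Since the signature of $M_f$ is assumed to be $(5,1)$, we need $p \ge 5$ and $q \ge 1$, i.e. $p = 5$, $q = 1$. But this contradicts $q \ge 2$, which we derived from Proposition~\ref{prop:line}. Hence no decomposition into $6$ real fourth powers exists, and $\mathrm{rk}_\RR(f) \ge 7$.

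**Writing it up.** The argument above is essentially complete; what remains is to state the two ingredients cleanly. First, I would record the linear-algebra lemma: if a real symmetric matrix is written as $\sum_{i=1}^p u_i u_i^T - \sum_{j=1}^q v_j v_j^T$, then it has at most $p$ positive and at most $q$ negative eigenvalues — this follows because $\sum u_i u_i^T$ is PSD of rank $\le p$, so on the orthogonal complement of its column space (dimension $\ge 6-p$) the form $M_f$ is negative semidefinite, forcing at most $p$ positive eigenvalues, and symmetrically for the negatives. Second, I would note that the middle catalecticant of $\ell^4$ is rank one and PSD, so $M_{\epsilon_i \ell_i^4} = \epsilon_i w_i w_i^T$ for some vector $w_i$ depending on $\ell_i$. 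Combining: a rank-$\le 6$ real decomposition of $f$ yields a presentation of $M_f$ with $q \le 1$ negative terms, contradicting the sign count $q \ge 2$ from Proposition~\ref{prop:line}.

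**Main obstacle.** The only real subtlety is making sure that the sign $\epsilon_i$ in the fourth-power decomposition of $f$ matches the sign of the corresponding rank-one term in $M_f$ — i.e., that the middle catalecticant of $\ell^4$ is genuinely positive semidefinite (not merely rank one). This is true because the middle catalecticant of $\ell^4$ evaluated on a quadratic $g$ gives (up to a positive constant) $g(\nabla)$ applied to $\ell^4$, which is a positive multiple of $g(\ell_{\mathrm{coeffs}})^2 \ge 0$; equivalently $M_{\ell^4} = c\, v v^T$ with $v$ the coefficient vector of $\ell^2$ and $c>0$. Once this positivity is in hand, the signature bookkeeping is immediate, and the corollary follows.
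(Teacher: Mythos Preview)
Your proof is correct and follows essentially the same approach as the paper: both combine the signature constraint (at least $5$ positive signs needed, via the Sylvester inertia bound you spell out) with Proposition~\ref{prop:line} (at least $2$ negative signs needed) to force at least $7$ terms. The paper states this directly as $5+2\ge 7$, while you phrase it by contradiction assuming a rank-$6$ decomposition, but the content is identical.
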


\begin{proof}
Since the signature of $M_f$ is $(5,1)$ we know that any decomposition of $f$ as a linear combination of $4$-th powers must include at least $5$ powers with positive signs and from Proposition \ref{prop:line} we see that we must also have at least $2$ powers with negative signs. Thus $\mathrm{rk}_{\RR}(f) \geq 7$.
\end{proof}

It now quickly follows that $7$ is a typical rank for ternary quartics.

\begin{example}
Let $$f=z^4+2(x^2y^2+x^2z^2+y^2z^2)-\frac{1}{10}x^4-\frac{1}{10}y^4.$$
It is easy to calculate that the middle catalecticant $M_f$ has signature $(5,1)$. See Proposition \ref{prop:typ7} for details. It is also easy to check that the line $y=0$ intersects $V_\RR(f)$ in $4$ distinct real points. It follows that any sufficiently small perturbation of $f$ will have signature $(5,1)$ and will intersect the line $y=0$ in $4$ real points. Therefore we see that $7$ is a typical rank for ternary quartics.

\end{example}

A ternary form is called \textit{hyperbolic} if there exists a point $p \in \RR\PP^2$ such that every line through $p$ intersects $V_\RR(f)$ in $\deg f$ many points (counted with multiplicity).

\begin{prop}
Let $f$ be a hyperbolic ternary quartic, which is not a power of a linear form. Any decomposition of $f$ as a sum of $4$-th powers must include $3$ powers with a positive sign and $3$ powers with a negative sign.
\end{prop}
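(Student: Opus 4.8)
The plan is to reduce to a statement about one sign and then exploit hyperbolicity by restricting $f$ to suitable lines. First, it suffices to show that every representation of $f$ as a sum of fourth powers uses at least $3$ powers with positive coefficient: indeed $-f$ is hyperbolic with respect to the same $p$ and is again not a power of a linear form, and negating all signs turns a representation of $f$ into one of $-f$, so applying this to $-f$ yields the assertion about negative coefficients. So assume, for contradiction, that $f=\sum_{i=1}^r\lambda_i l_i^4$ has at most two positive coefficients, and write $f=f_+-f_-$ with $f_+=\sum_{\lambda_i>0}\lambda_i l_i^4$ and $f_-=\sum_{\lambda_i<0}|\lambda_i| l_i^4$; both are nonnegative and $f_+$ is a sum of at most two fourth powers. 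Hence $V_\RR(f_+)=\bigcap_{\lambda_i>0}\{l_i=0\}$ is a nonempty linear subspace of $\RR\PP^2$; fix a point $Q$ in it, so that every $l_i$ with $\lambda_i>0$ passes through $Q$.

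I would then carry out the main case, where $V_\RR(f)$ is smooth: being the real locus of a smooth hyperbolic plane quartic it is then a pair of nested ovals, and $p$ lies in the open region $D$ enclosed by the inner one (the hyperbolicity region). Choose a line $L$ through $p$: if $Q\neq p$, take $L$ to pass through $Q$ as well, and if $Q=p$, take any line through $p$ on which $f$ does not restrict to a fourth power of a linear form (such a line exists since $f$ itself is not such a power). Then every $l_i$ with $\lambda_i>0$ restricts on $L$ to a linear form vanishing at a common point of $L\cong\PP^1$, so these restrictions are pairwise proportional, and therefore $f_+|_L=c\,l^4$ is a single fourth power with $c\ge 0$. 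Consequently $f|_L=c\,l^4-f_-|_L$ exhibits the binary quartic $f|_L$ as a sum of fourth powers with at most one positive coefficient. On the other hand, since $L$ passes through the interior point $p$ it meets the inner oval in at least two distinct points (one on either side of $p$), so $f|_L$ is not a fourth power of a linear form, while hyperbolicity forces $f|_L$ to have four real roots counted with multiplicity. Now \cite[Corollary 2.6]{Rez1}, the binary statement used in the proof of \propref{prop:line}, says that any representation of such a binary form as a sum of fourth powers must use at least two powers with positive coefficient — contradicting the representation above. This shows that $f$ needs at least $3$ positive powers, and by the first paragraph at least $3$ negative powers as well.

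The hypothesis that $f$ is not a power of a linear form is used precisely to guarantee that $f|_L$ has at least two distinct real roots, i.e. to exclude the degenerate possibility $f|_L=\kappa\,l^4$, which would be compatible with a single positive power; this is the only essentially non-local use of hyperbolicity and I expect it to be the crux in the non-smooth cases. For reducible hyperbolic quartics, or ones whose two ovals meet, one would either check directly that no line through $p$ can meet $V_\RR(f)$ in a single point of multiplicity $4$, or deduce these cases by a limiting argument from the smooth ones; I anticipate this to be routine but to require some care in the statement. The remaining bookkeeping — that $f_+|_L$ really collapses to one fourth power and that restriction to $L$ preserves the signs of the coefficients — is immediate.
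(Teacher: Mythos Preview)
Your argument is essentially the paper's: assume at most two positive terms, let $q$ be their common zero, take the line $L$ through the hyperbolicity point $p$ and $q$, restrict, and observe that the positive part collapses to a single fourth power on $L$, contradicting \cite[Corollary~2.6]{Rez1}. The paper carries this out in a few lines, with no distinction between smooth and singular $f$ and no separate treatment of the case $Q=p$.

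Your restriction to smooth $V_\RR(f)$ via the nested-ovals topology is therefore an unnecessary detour that leaves your proposal incomplete. The only reason you introduce it is to ensure that $f|_L$ has at least two distinct real zeros, so that the cited binary result applies. The paper, to be fair, is equally terse on this point and simply invokes \cite{Rez1} without comment; your concern is legitimate. But nothing in that verification genuinely requires smoothness of $V_\RR(f)$, so rather than deferring the singular and reducible cases to a sketched limiting argument, you should drop the case split and argue uniformly as the paper does.
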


\begin{proof}
Again, it suffices to show the claim for the number of positive signs. Suppose not, and let $$f=\ell_1^4+\ell_2^4-\sum_{i=1}^k m_i^4$$
be a decomposition of $f$. Let $p\in \RR\PP^2$ be a point such that $f$ is hyperbolic with respect to $p$. The lines defined by vanishing of $\ell_1$ and $\ell_2$ meet in a unique point $q$. Let $L$ be a line through $p$ and $q$ and consider the restriction $\hat{f}$ of $f$ to $L$. From our construction of $L$ it follows that restrictions of $\ell_1$ and $\ell_2$ to $L$ are constant multiples of each other. Therefore we obtain a decomposition of $\hat{f}$ as a sum of $4$-th powers with only $1$ positive sign, which is again a contradiction by \cite[Corollary 2.6]{Rez1}.
\end{proof}

From the above proposition we immediately obtain the following corollary relating the rank of hyperbolic ternary quartics and the signature of middle catalecticant $M_f$. The proof is identical to the proof of Corollary \ref{cor:line}.

\begin{cor}\label{cor:hyp}
Let $f$ be a hyperbolic ternary quartic. If the signature of $M_f$ is $(4,2)$ then $\mathrm{rk}_{\RR}(f)\geq7$ and if the signature of $M_f$ is $(5,1)$ then $\mathrm{rk}_{\RR}(p)\geq8$.
\end{cor}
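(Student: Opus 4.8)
The plan is to mimic exactly the two-ingredient argument used in Corollary \ref{cor:line}, replacing Proposition \ref{prop:line} with the hyperbolic analogue just proved. First, recall the general principle tying the signature of $M_f$ to signs in a Waring decomposition: if $f = \sum_i \pm \ell_i^4$, then $M_f$ is a sum of the rank-one forms $\pm M_{\ell_i^4}$, each of which is positive (resp. negative) semidefinite according to the sign, so if $M_f$ has signature $(s_+, s_-)$ then any such decomposition must contain at least $s_+$ summands with a positive sign and at least $s_-$ with a negative sign. This is precisely the observation already invoked in the proof of Corollary \ref{cor:line}, now applied with $(s_+,s_-) = (4,2)$ in the first case and $(5,1)$ in the second.

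Next I would apply the preceding Proposition: a hyperbolic ternary quartic $f$ that is not a fourth power of a linear form admits no decomposition as a sum of fourth powers with fewer than $3$ positive signs, and symmetrically no decomposition with fewer than $3$ negative signs. (If $f$ \emph{is} a fourth power of a linear form then it is not hyperbolic in the required sense unless $\deg f$ points collapse, so this degenerate case does not arise; in any event such $f$ has rank $1$ and the statement is about generic hyperbolic quartics.) Combining the two constraints: in the signature $(4,2)$ case we need at least $4$ positive signs and at least $3$ negative signs, hence $\mathrm{rk}_\RR(f) \geq 7$; in the signature $(5,1)$ case we need at least $5$ positive signs and at least $3$ negative signs, hence $\mathrm{rk}_\RR(f) \geq 8$.

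There is really no serious obstacle here — the corollary is a direct logical consequence of the two facts already established (the semidefiniteness bound on signs coming from the signature of $M_f$, and the sign-balance forced by hyperbolicity). The only point requiring a line of care, as the paper itself notes by saying "the proof is identical," is to make sure the two lower bounds on positive and negative signs are genuinely additive, i.e. that the $s_+$ positive powers forced by the catalecticant and the $3$ negative powers forced by hyperbolicity are counted among disjoint summands — which is immediate since a single summand $\pm \ell^4$ carries exactly one sign. Hence the total number of summands is at least the sum of the two lower bounds, giving $7$ and $8$ respectively.
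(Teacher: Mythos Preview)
Your proposal is correct and follows exactly the approach the paper indicates (``the proof is identical to the proof of Corollary~\ref{cor:line}''): combine the sign constraints from the signature of $M_f$ with the $3$-positive/$3$-negative constraint from the hyperbolic proposition, and add. The only minor clean-up is your handling of the degenerate fourth-power case: the cleanest way to exclude it is simply that signature $(4,2)$ or $(5,1)$ forces $\rank M_f = 6$, whereas a fourth power has $\rank M_f = 1$.
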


\begin{remark}
It is possible to show via a direct computation that the following quartic is hyperbolic, the signature of its middle catalecticant is $(3,3)$ and it has rank $6$:
$$f=2x^4+y^4+z^4-3x^2y^2-3x^2z^2-y^2z^2.$$
Below we sketch the computational proof  that the rank is $6$. Consider $p=f+c((x+my+2mz)^4+(x+2my+mz)^4)$ with $$c=\frac{7 (2144 + 113 \sqrt{394})}{3860} \hspace{.5cm} \text{and} \hspace{.5cm} m=-\frac{1}{3} \sqrt{\frac{  \sqrt{394}-13}{5}}.$$
It is possible to check that the rank of the middle catalecticant $M_p$ of $p$ is $4$ and furthermore the pair of quadrics in the kernel of $M_p$ intersect in $4$ real points. Therefore the rank of $f$ is at most $6$ and since $M_f$ is full rank it follows that the rank of $f$ is exactly $6$. Therefore we see that not all hyperbolic ternary quartics have rank at least $7$.
\end{remark}

We now demonstrate another example of a ternary quartic of typical rank $7$:

\begin{example}\label{4.10}
Let $$f=x^4 + y^4 + z^4 - 3x^2 y^2 - 3 x^2 z^2 + y^2 z^2.$$
It is easy to check that any line through $[1:0:0]$ intersects $V_{\RR}(f)$ in $4$ distinct real points; a picture of $V_{\RR}(f)$ in the plane $x=1$ is shown below. The signature of $M_f$ is $(4,2)$. It follows by Corollary \ref{cor:hyp} that $\mathrm{rank}_\RR(f) \geq 7$ and the same also holds for a small perturbation of $f$.
\end{example}

\begin{figure}[b]
\centering
\includegraphics[scale=0.8]{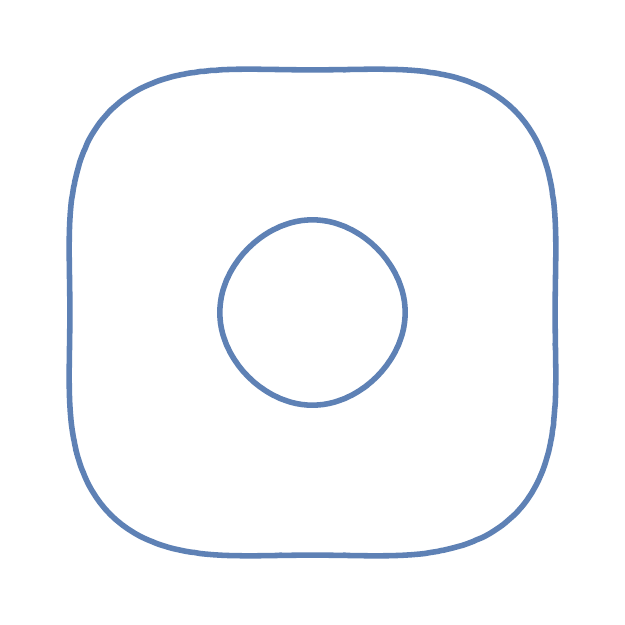}
\caption{The real variety of $f=x^4 + y^4 + z^4 - 3x^2 y^2 - 3 x^2 z^2 + y^2 z^2.$}
\end{figure}

Corollary \ref{cor:hyp} raises a possibility of showing that $8$ is a typical rank for ternary quartics by exhibiting a hyperbolic quartic $f$ such that $M_f$ has signature $(5,1)$. We have not been able to construct such an example, and we believe that hyperbolic quartics with signature $(5,1)$ do not exist.  In the example below we present some evidence toward nonexistence of such ternary quartics. 

\begin{example}
It is not difficult to show that if $f$ is the product of the following two conics
\begin{equation}\label{2conics}
f=(a^2x^2+b^2y^2-c^2z^2)(d^2x^2+e^2y^2-g^2z^2)
\end{equation}
with $a,b,c,d,e,g\in \mathbb{R}$,
then its middle catalecticant $M_f$ cannot have signature $(5,1)$.
It is easy to calculate that the characteristic polynomial of the middle catalecticant
has the following eigenvalues
$l_1=-4c^2e^2-4b^2g^2$, $l_2=-4c^2d^2-4a^2g^2$, $l_3=4b^2d^2+4a^2e^2$ and 
the three roots of a degree three polynomial in $l$ starting as follows: $p(l)=l^3+24l^2(-a^2d^2-b^2e^2-c^2g^2)+\cdots$.
 If we want that the signature of $M_f$ is $(5,1)$ then we need that all roots of $p$ are negative.
Now the coefficients of $l^2$ and $l^3$ in $p$ have different signs, so, since $p$ has to have 3 real solutions,  by Descartes rule of signs, there is at least one positive root of $p$. This shows that signature of $M_f$ cannot be $(5,1)$.
\end{example}

\begin{example} In the previous example, one can be more precise and see that only one root of $p$ that can be zero, the other two will always have opposite signs. Therefore the only two possible signatures for $M_f$ are $(4,2)$
(for example one can take $f=(x^2+y^2-z^2)(x^2+9y^2-z^2)$)
and $(3,3)$ (for example one can take $f=(x^2+y^2-z^2)(x^2+\frac{1}{100}y^2-z^2)$). Moreover this discussion also shows that if $f$ represents a quartic like (\ref{2conics}) with singular middle catalecticant $M_f$, then the signature of $M_f$ can only be $(3,2)$.
\end{example}

\subsection{Signature of the Middle Catalecticant and Real Rank}
We now show that $7$ is a typical rank for ternary quartics in a different way.
We need the following

\begin{theorem}[Reznick]\cite[Theorem 4.6]{Rez}\label{thm:rez}
Let $f$ be a real ternary quartic and let $M_f$ be its middle catalecticant.
If $M_f\succeq 0$, then $$\mathrm{rk}_{\RR}(f)=\mathrm{rk}(M_f),$$
where $\mathrm{rk}(M_f)$ is the usual matrix rank of $M_f$.
\end{theorem}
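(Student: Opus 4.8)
The plan is to prove the statement by relating the real rank of a ternary quartic with positive semidefinite middle catalecticant to the classical (complex) theory of the catalecticant, together with positivity constraints coming from the semidefiniteness. First I would recall the setup: for $f \in S^4\RR^3$, the middle catalecticant $M_f\colon S^2\RR^3 \to S^2\RR^3$ is the symmetric bilinear contraction, and a decomposition $f=\sum_{i=1}^k \lambda_i \ell_i^4$ forces $M_f = \sum_i \lambda_i M_{\ell_i^4}$, where each $M_{\ell_i^4}$ is a rank-one positive semidefinite operator (the square of the linear functional ``evaluate the quadric at $\ell_i$''). The inequality $\mathrm{rk}_\RR(f) \geq \mathrm{rk}(M_f)$ is then automatic: the matrix rank of $M_f$ is at most the number of rank-one summands, i.e.\ at most $\mathrm{rk}_\RR(f)$; this holds regardless of definiteness. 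So the real content is the reverse inequality $\mathrm{rk}_\RR(f) \leq \mathrm{rk}(M_f)$ under the hypothesis $M_f \succeq 0$.

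For the reverse inequality, I would argue as follows. Set $r = \mathrm{rk}(M_f)$. Because $M_f \succeq 0$ with rank $r$, its image is an $r$-dimensional subspace $W \subseteq S^2\RR^3$ on which $M_f$ restricts to a positive definite form, and $M_f$ factors through $W$. The key classical fact (this is the heart of Reznick's argument, and is where apolarity enters) is that the ideal $\ker M_f \subseteq S^2\RR^3$, viewed via apolarity as the degree-$2$ part of the annihilator ideal $f^\perp$, is ``large enough'' and behaves well precisely because of definiteness: one shows that the zero locus in $\PP^2_\CC$ of the quadrics in $\ker M_f$ consists of exactly $r$ real points $p_1,\dots,p_r$ in general position (for $r \le 6$ this follows from the fact that the net or web of quadrics through fewer than $6$ points, cut down by the semidefiniteness, must drop to finitely many common zeros, all real because a real PSD form annihilated by no quadric through a non-real conjugate pair would violate positivity). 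Then the corresponding powers $\ell_{p_1}^4,\dots,\ell_{p_r}^4$ of linear forms dual to the $p_i$ span a space containing $f$, and solving the resulting linear system $f = \sum \mu_i \ell_{p_i}^4$ gives coefficients $\mu_i$; positivity of $M_f$ on $W$ forces every $\mu_i > 0$, so in particular the $\mu_i$ are real and nonzero, yielding $\mathrm{rk}_\RR(f) \leq r$.

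The main obstacle — and the step I would expect to occupy most of the work — is establishing that the quadrics in $\ker M_f$ have exactly $r$ common zeros and that these are all real, rather than some being complex-conjugate pairs or the base locus being positive-dimensional. This is a catalecticant/apolarity statement about the scheme defined by $f^\perp$ in degree $2$, and it genuinely uses the hypothesis $M_f \succeq 0$: the PSD condition rules out cancellation between a summand and its complex conjugate (which is exactly the phenomenon of Lemma~\ref{c2r3}, where a conjugate pair of cubes re-expresses with a \emph{negative} coefficient). One clean way to organize this is to note that if the common zero locus contained a conjugate pair $p, \bar p$ with $p$ non-real, then evaluation at $p$ and $\bar p$ would produce, in the dual picture, an indefinite contribution, contradicting $M_f \succeq 0$; and if the base locus were positive-dimensional or had a fat point, a limiting/degeneration argument shows $M_f$ would be forced to be singular in a way incompatible with its rank being exactly $r$ while PSD. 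Since Theorem~\ref{thm:rez} is quoted from \cite[Theorem 4.6]{Rez}, I would cite Reznick for the detailed apolarity bookkeeping and present only the structural outline above, emphasizing the two inequalities and the role of semidefiniteness in forcing reality and positivity of the decomposition coefficients.
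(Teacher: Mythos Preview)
The paper does not supply its own proof of this theorem: it is stated as a citation of \cite[Theorem 4.6]{Rez} and used as a black box. So there is no ``paper's proof'' to compare against, and your proposal should be read as an independent attempt at Reznick's result.

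On its own merits, your outline has a genuine gap in the upper-bound step. You claim that the degree-two piece of the apolar ideal, namely $\ker M_f \subset S^2\RR^3$, cuts out exactly $r=\mathrm{rk}(M_f)$ real points, and then use those points to build the decomposition. This fails already for the two most important cases. When $r=6$ the catalecticant is nonsingular, so $\ker M_f=\{0\}$ imposes no conditions at all and there are no quadrics to intersect; your argument produces no candidate points. When $r=5$ the kernel is one-dimensional, a single conic, whose zero locus is a curve, not five points. Only from $r\le 4$ down does $\dim\ker M_f \ge 2$ give a zero-dimensional base locus, and even then reality of all intersection points is not as immediate as you suggest (your ``indefinite contribution'' heuristic for excluding conjugate pairs is not a proof). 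So the mechanism you propose simply does not cover the generic situation.

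The actual argument in \cite{Rez} is of a different nature and hinges on a special feature of ternary quartics: by Hilbert's 1888 theorem every nonnegative ternary quartic is a sum of squares, and under the apolar pairing this identifies the cone of forms with $M_f\succeq 0$ with the cone of sums of fourth powers of real linear forms (the two cones are mutually dual, and Hilbert's theorem says the duals coincide). Once one knows $f$ lies in the sums-of-fourth-powers cone, a Carath\'eodory/face argument on that cone yields a representation with exactly $\mathrm{rk}(M_f)$ terms. If you want to keep an apolarity flavour you would still need this cone-duality input to handle $r\in\{5,6\}$; the degree-two apolar ideal alone is not enough.
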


\begin{prop}\label{prop:typ7}
Let $$p=z^4+2(x^2y^2+x^2z^2+y^2z^2)-\frac{1}{10}x^4-\frac{1}{10}y^4$$
Then $\mathrm{rk}_{\RR}(p)=7$ and every small perturbation of $p$ has real rank $7$.
Hence $7$ is a typical rank for ternary quartics.
\end{prop}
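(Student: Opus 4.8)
The plan is to verify the two ingredients needed to invoke Corollary~\ref{cor:line}: that the middle catalecticant $M_p$ has signature $(5,1)$, and that there is a real line meeting $V_\RR(p)$ in four real points, at least two of which are distinct. First I would write down $M_p$ explicitly in the monomial basis of $S^2\RR^3$ (the $6\times 6$ Gram-type matrix whose entries are, up to binomial factors, the coefficients of $p$); the polynomial $p$ has been chosen so that this matrix is sparse, with the $-\tfrac1{10}$ coefficients on $x^4,y^4$ controlling the single negative eigenvalue. I would then compute the characteristic polynomial (or simply the six eigenvalues, exploiting the obvious $x\leftrightarrow y$ symmetry, which block-diagonalizes $M_p$ into a symmetric and an antisymmetric part) and check that exactly one eigenvalue is negative, i.e. $\det$ and the signs of the leading principal minors certify signature $(5,1)$.

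Next I would check the topological hypothesis: restrict $p$ to the line $y=0$, obtaining the binary quartic $\hat p = z^4 + 2x^2z^2 - \tfrac1{10}x^4$ (in coordinates $(x,z)$). I would show this has four distinct real roots, for instance by dehomogenizing, say setting $z=1$, to get $-\tfrac1{10}x^4 + 2x^2 + 1$, whose discriminant in $x^2$ is positive and which has one positive and one negative root in $x^2$ — hence two real values of $x$ from the positive root and, since the quartic is even, these come in $\pm$ pairs, giving four distinct real intersection points of the line $y=0$ with $V_\RR(p)$. (One should double-check the four points are genuinely distinct and that the line is not a component of $V_\RR(p)$, which is immediate since $\hat p\not\equiv 0$.)

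With both hypotheses in hand, Corollary~\ref{cor:line} gives $\mathrm{rk}_\RR(p)\ge 7$. For the reverse inequality I would exhibit, or argue for the existence of, a rank-$7$ decomposition: since a generic ternary quartic has real rank $6$ or $7$ and $p$ lies in a region where rank $6$ is excluded by the signature/line obstruction, the upper bound $\mathrm{rk}_\RR(p)\le 8$ from Proposition~\ref{prop:typquartic} together with Theorem~\ref{thm:typical} would already pin things down once we know both $7$ is achieved somewhere nearby and $8$-or-less holds; more directly, I expect the paper to produce an explicit linear combination of seven fourth powers equal to $p$, perhaps via a generic polar $P_Q(p)$ of rank $4$ as in Proposition~\ref{prop:typquartic} together with a rank-$3$ completion of the leftover binary form. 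Since signature $(5,1)$ is an open condition and "the line $y=0$ meets $V_\RR$ in four real points" is stable under small perturbations (four distinct transverse real intersections persist), every $f$ near $p$ satisfies the hypotheses of Corollary~\ref{cor:line}, so $\mathrm{rk}_\RR(f)\ge 7$; combined with $\mathrm{rk}_\RR(f)\le 8$ and the fact that the set of rank-$7$ quartics is nonempty near $p$, we conclude $7$ is typical. The main obstacle I anticipate is producing the explicit rank-$7$ decomposition (or otherwise certifying $\mathrm{rk}_\RR(p)\le 7$ rather than merely $\le 8$); the signature and line computations are routine, but matching the lower bound $7$ exactly requires either a clever closed-form decomposition or a careful argument that rank $8$ does not occur on an open neighborhood.
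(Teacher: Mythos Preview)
Your lower-bound plan via Corollary~\ref{cor:line} has a computational slip: on the line $y=0$ the restriction is $\hat p=z^4+2x^2z^2-\tfrac{1}{10}x^4$, and as a quadratic in $z^2/x^2$ its roots are $-1\pm\sqrt{1.1}$, only one of which is positive. So there are exactly \emph{two} real intersection points, not four; your sentence ``two real values of $x$ from the positive root \dots\ come in $\pm$ pairs, giving four'' double-counts. The approach is salvageable with the line $z=0$: there $\hat p=2x^2y^2-\tfrac{1}{10}(x^4+y^4)$, and $(y/x)^2=10\pm\sqrt{99}$ are both positive, giving four distinct real points. With that fix, Corollary~\ref{cor:line} does yield $\mathrm{rk}_\RR\ge 7$ for $p$ and for all nearby forms.

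The paper's own proof of this proposition, however, uses neither Corollary~\ref{cor:line} nor the topology of $V_\RR(p)$. For the lower bound it argues directly from the catalecticant: if $\mathrm{rk}_\RR(p)=6$ then (signature $(5,1)$) the decomposition has five positive and one negative sign, so $M_p+S$ would be positive semidefinite for some rank-one catalecticant $S=v v^T$ with $v=(a^2,b^2,c^2,ab,ac,bc)$. But the top $2\times 2$ block of $M_p+S$ is
\[
\begin{pmatrix} a^4-\tfrac{1}{10} & a^2b^2+\tfrac{1}{3}\\ a^2b^2+\tfrac{1}{3} & b^4-\tfrac{1}{10}\end{pmatrix},
\]
whose determinant is strictly negative for all real $a,b$; hence no such $S$ exists, and the same inequality persists under perturbation of $p$.

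The genuine gap in your proposal is the upper bound, which you flag yourself. The paper closes it cleanly with Reznick's Theorem~\ref{thm:rez}: setting $\bar p=p+\tfrac{13}{30}x^4+\tfrac{13}{30}y^4$, one checks $M_{\bar p}\succeq 0$ with $\mathrm{rk}(M_{\bar p})=5$, whence $\mathrm{rk}_\RR(\bar p)=5$ and $\mathrm{rk}_\RR(p)\le 7$. For nearby $q$, one takes $q+ax^4+by^4$ with $a,b$ large enough to make the catalecticant diagonally dominant and then decreases $a,b$ to reach a singular PSD catalecticant, again giving $\mathrm{rk}_\RR(q)\le 7$. Your suggestion of using Proposition~\ref{prop:typquartic} only yields $\le 8$, and invoking Theorem~\ref{thm:typical} does not by itself exclude rank $8$ on a whole neighborhood; the Reznick trick is the missing ingredient.
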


\begin{proof} We write the middle
 catalecticant matrix $M_p$ with respect to the monomial basis $\{x^2,y^2,z^2,xy,xz,yz\}$. Then we have 

$$M_p=\left[\begin{array}{cccccc}\frac{-1}{10}&\frac{1}{3}&\frac{1}{3}&0&0&0\\ \frac{1}{3}&\frac{-1}{10}&\frac{1}{3}&0&0&0\\\frac{1}{3}&\frac{1}{3}&1&0&0&0\\0&0&0&\frac{1}{3}&0&0\\0&0&0&0&\frac{1}{3}&0\\0&0&0&0&0&\frac{1}{3}\end{array}\right]$$ \\

It is easy to see that $M_p$ has $5$ positive eigenvalues and $1$ negative eigenvalue. We first observe that $\bar{p}=p+13/30x^4+13/30y^4$ has a positive semidefinite catalecticant matrix of rank $5$ and therefore the rank of $p$ is at most $7$. Furthermore, for any $q$ in a small neighborhood of $p$ we can take $\bar{q}=q+ax^4+by^4$ and the catalecticant matrix of $\bar{q}$ will be diagonally dominant with positive diagonal entries, for sufficiently large $a$ and $b$. This implies that for any $q$ in the neighborhood of $p$ we can find $a,b \in \mathbb{R}$ such that $q+ax^4+by^4$ has a singular positive semidefinite catalecticant matrix and therefore the rank of $q$ is at most 7 by \ref{thm:rez}.

We now show that the rank of any form in a sufficiently small neighborhood of $p$ is at least $7$. Since $M_p$ has rank 6, we know that the rank of $p$ is at least $6$. If $p$ has rank $6$ then it must decompose as a linear combination of $6$ fourth powers, with 5 positive signs and $1$ negative sign. Rank $1$ catalecticants correspond to fourth powers of linear forms. With the monomial basis, for any fourth power $(ax+by+cz)^4$ the catalecticant matrix is $v^T v$ where $$v=\left(a^2,b^2,c^2,ab,ac,bc \right).$$

To argue that the rank of $p$ is at least $7$ we just need to show that $M_p+S$ is not positive semidefinite for any rank $1$ catalecticant matrix $S$. Now it suffices to look at the top $2 \times 2$ submatrix of $M_p+S$ which has the form:

$$\left[\begin{array}{cc}a^4-\frac{1}{10}&a^2b^2+\frac{1}{3}\\a^2b^2+\frac{1}{3}&b^4-\frac{1}{10}\end{array}\right].$$

The determinant of this submatrix is strictly negative for all $a,b \in \RR$, which means that it is never positive semidefinite, and therefore $M_p+S$ is not positive semidefinite for any rank one catalecticant matrix $S$. Thus $p$ has rank $7$. Moreover, the same argument applies to any $q$ in a sufficiently small neighborhood of $p$. Thus we have shown that $p$ has rank $7$, and any form in a neighborhood of $p$ has rank $7$ as well.
\end{proof}
\vskip 0.8cm

\subsection{Varieties of Sums of Powers and Real Rank}
We now sketch an alternative argument to prove that $7$ is a typical rank for real ternary quartics, suggested to us by F. Schreyer.
There is a natural birational correspondence, due to Mukai (\cite{M92}), between the moduli space of plane quartics defined over a field $K$ of characteristic 0, and the moduli space of prime Fano $3$-folds of genus  $12$. Let us briefly recall it.

The Fano $3$-folds of genus $12$ are classically called \emph{Fano 3-folds of type $V_{22}$} because of their anticanonical model of degree $22$ in $\mathbb{P}^{13}$.
\\
 Let $f\in S^4\CC^3$ be a homogeneous polynomial corresponding to a plane quartic $F\subset \mathbb{P}(\CC^3)$. A polar hexagon $\Gamma$ of $F$ is the union of six lines $\Gamma=\{l_1 \cdots l_6=0\}$ such that $f=l_1^4+ \cdots + l_6^4$. The hexagon $\Gamma$ can be identified with a point of $\mathrm{Hilb}^6(\mathbb{P}(\CC^3)^{\vee})$. The variety of sums of powers presenting $f$ is (see \cite{RS})
$$VSP(F,6):=\overline{\{\Gamma \in \mathrm{Hilb}^6(\mathbb{P}(\CC^3)^{\vee})\, | \, \Gamma \hbox{ is polar to } F\}}.$$
Mukai  (Theorem 11, \cite{M92}) showed that if $X$ is a prime Fano 3-fold of genus 12 over $K$, then there exists a plane quartic $F$ such that
$$X\simeq VPS(F,6).$$
Moreover he proved that the moduli space $\mathcal{M}_{Fano}$ of prime Fano 3-folds of genus 12 is birational to the moduli space $\mathcal{M}_q$ of plane quartics.
\\
Therefore, if there exist $l_1, \ldots , l_6\in S^1K^3$ such that $f\in S^4K^3$ can be written as $f=\sum_{i=1}^6l_i^4$, such a decomposition can be described as a point $\Gamma$ of the corresponding Fano 3-fold $X_f$ of type $V_{22}$.
Observe that a priori it is not sufficient that $X_f$ has a real point in order to claim that all the $l_i$'s involved in the decomposition of $f$ are reals. But vice-versa, if $l_1, \ldots , l_6\in S^1\mathbb{R}^3$ are such that $f=\sum_{i=1}^6\lambda_il_i^4$, then the corresponding point $\Gamma \in X_f$ is a real point.
Therefore if there exists an open set $\mathcal{U}\subset \mathcal{M}_{Fano}$ without real points, then the birational map $m:\mathcal{M}_{Fano}   \dasharrow
\mathcal{M}_{q}$ will produce an open set $m(\mathcal{U})\subset \mathcal{M}_{q}$ of plane quartics that cannot be written as sums of 6 four-powers of linear forms.  The existence of such an  $\mathcal{U}\subset \mathcal{M}_{Fano}$
is proved in  \cite[Theorem 1.1]{KO}. Hence the quartics in $m(\mathcal{U})$ have rank $> 6$ and by Theorem \ref{thm:typical} this is enough  to claim that $7$ is a typical rank for real plane quartics.

\section{Symmetric even quartics}\label{sec:symeven}

In this section we compute the rank of a special class of quartics. These are the quartics
$$f_{a}=a(x^4+y^4+z^4)+6(x^2y^2+x^2z^2+y^2z^2).$$
We present complete analysis of the case $a \geq -3$. We believe that in the case $a<-3$ the rank is always $6$. See Remark \ref{rem:-3} for more details.
Besides the trivial case $f_\infty=x^4+y^4+z^4$, which has rank $3$, $f_a$ are the only quartics which are symmetric (invariant by permuting variables)
and even (invariant by changing signs to any variable).
The discriminant can be computed by Sylvester formula (see \cite{LR}\S 2.2),
it is (up to scalars)
$$D(f_a)=a^3(a-3)^{14}(a+3)^6(a+6)^4.$$

The middle catalecticant of $f_{a}$, with respect to the monomial basis $\{x^2,y^2,z^2,xy,xz,yz\}$, is
(up to positive scalars)
$$M_{f_{a}}=\bgroup\begin{pmatrix}a&
      1&
      1&
      0&
      0&
      0\\
      1&
      a&
      1&
      0&
      0&
      0\\
      1&
      1&
      a&
      0&
      0&
      0\\
      0&
      0&
      0&
      1&
      0&
      0\\
      0&
      0&
      0&
      0&
      1&
      0\\
      0&
      0&
      0&
      0&
      0&
      1\\
      \end{pmatrix}\egroup,$$
and we may compute  $\det(M_{f_{a}})=(a-1)^2(a+2)$.
It is straightforward to prove the following two lemmas:

\begin{lemma}[Real shape]\label{lem:shape}
The quartic $f_{a}$ has the following real shape
\begin{itemize}
\item{} for $0\le a $  it is strictly positive (no real points), the complex curve $f_a$ is smooth, unless $f_3$ which is a double conic,
and $f_0$ which has three nodes.
\item{} for $-3< a< 0$ it consists of three ovals.
\item{} for $a=-3$ it splits into four lines.
\item{} for $-6< a <-3$ it consists of four ovals.
\item{} for $a = -6$ it consists of four points.
\item{} for $a < -6$ it is strictly negative (no real points).
\end{itemize}
\end{lemma}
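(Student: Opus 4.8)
The plan is to use the evenness of $f_a$ to reduce the whole statement to the study of one conic. Writing $u=x^2,\ v=y^2,\ w=z^2$, we have $f_a=Q_a(x^2,y^2,z^2)$, where
$$Q_a(u,v,w)=a(u^2+v^2+w^2)+6(uv+uw+vw)$$
is a ternary quadratic form with eigenvalue $a+6$ on the eigenvector $(1,1,1)$ and eigenvalue $a-3$ of multiplicity $2$. Both the real zero locus of $f_a$ in $\RR\PP^2$ and the sign of $f_a$ on $\RR^3$ are controlled by the behaviour of $Q_a$ on the triangle $\Delta=\{[u:v:w]\in\RR\PP^2\mid u,v,w\ge 0\}$ through the map $\phi\colon\RR\PP^2\to\Delta$, $\phi([x:y:z])=[x^2:y^2:z^2]$: the complement of the three coordinate lines in $\RR\PP^2$ is a union of four open triangles, each carried homeomorphically by $\phi$ onto $\inter\Delta$; over each open edge of $\Delta$ the map $\phi$ is $2$-to-$1$; and $\phi$ folds the three coordinate lines onto the three edges of $\Delta$.

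The extreme and special values of $a$ are disposed of directly. If $a>3$ then $Q_a\succ 0$; more simply, for every $a>0$ one has $f_a\ge a(x^4+y^4+z^4)>0$ off the origin, so $f_a$ has no real points, and the complex curve is smooth except that $f_3=3(x^2+y^2+z^2)^2$ is a double conic (since $Q_3=3(u+v+w)^2$). If $a<-6$ then $Q_a\prec 0$, so $f_a<0$ on $\RR^3\setminus\{0\}$. For $a=0$, $f_0=6(x^2y^2+x^2z^2+y^2z^2)$ has real zero locus $\{[1:0:0],[0:1:0],[0:0:1]\}$, and a one-line local computation (the lowest-order term at $[1:0:0]$ in the chart $x=1$ is $y^2+z^2$, and symmetrically) shows each of these three points is an ordinary node. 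For $a=-3$, Heron's identity $2(x^2y^2+y^2z^2+z^2x^2)-(x^4+y^4+z^4)=(x+y+z)(x+y-z)(x-y+z)(-x+y+z)$ gives $f_{-3}=3(x+y+z)(x+y-z)(x-y+z)(-x+y+z)$, a configuration of four distinct real lines with no three concurrent. For $a=-6$, $Q_{-6}=-3\bigl[(u-v)^2+(v-w)^2+(w-u)^2\bigr]$, so $f_{-6}\le 0$ with zero locus $\{x^2=y^2=z^2\}=\{[1:\pm1:\pm1]\}$, four points. Smoothness of the complex curve for all remaining $a$ is immediate from the discriminant $D(f_a)=a^3(a-3)^{14}(a+3)^6(a+6)^4$ displayed above, which vanishes only for $a\in\{0,3,-3,-6\}$.

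It remains to treat the two open ranges $-3<a<0$ and $-6<a<-3$, where $Q_a$ has signature $(1,2)$; then $C_a:=\{Q_a=0\}\subset\RR\PP^2$ is a smooth oval bounding the disk $\{Q_a>0\}$ about $[1:1:1]$. Restricting $Q_a$ to an edge of $\Delta$ and parametrizing that edge by $t\in[0,1]$ yields, up to a positive factor, the function $a+(6-2a)\,t(1-t)$: for $-3<a<0$ it is negative near $t=0,1$ and positive at $t=1/2$, with exactly two zeros in $(0,1)$; for $-6<a<-3$ it is strictly negative on all of $[0,1]$. Hence for $-6<a<-3$ we get $C_a\cap\partial\Delta=\emptyset$, and since $Q_a>0$ at the barycenter while $Q_a<0$ on $\partial\Delta$, the disk $\{Q_a>0\}$ and its boundary $C_a$ lie inside $\inter\Delta$; pulling back by $\phi$, the set $\phi^{-1}(C_a)$ is four disjoint homeomorphic copies of $C_a$, one in each open triangle, so $V_\RR(f_a)$ consists of four ovals. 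For $-3<a<0$, $C_a$ meets $\partial\Delta$ in six points and $C_a\cap\Delta$ is three disjoint arcs, one cutting off each vertex of $\Delta$ (the three corners, where $Q_a<0$); each arc has its interior in $\inter\Delta$ and its two endpoints in the interiors of two distinct edges, so it lifts to four sub-arcs whose endpoints lie over those two branch edges, and a short bookkeeping of how these glue along the branch edges (crossing $\{z=0\}$ is the deck transformation "negate $z$", etc., and $f_a$, being smooth, crosses each coordinate line transversally at the six relevant points) shows that the four sub-arcs close up cyclically into a single circle; thus each of the three arcs contributes one oval and $V_\RR(f_a)$ consists of three ovals. In both ranges $f_a$ is smooth of even degree, so each component is an oval.

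The one genuinely delicate point is the gluing count for $-3<a<0$: one must verify that the four lifts of each arc form a single loop and not two (which would yield six ovals rather than three). The cleanest way to organise this is via the $(\mathbb{Z}/2)^2$ deck group of $\phi$ over $\inter\Delta$. Alternatively, the gluing argument can be sidestepped altogether by invoking Ehresmann's theorem --- the topological type of $V_\RR(f_a)$ is locally constant in $a$ as long as $f_a$ stays smooth --- and then reading the type off at one convenient parameter in each subinterval: as $a\to 0^-$, the expansion $f_a=a+6(y^2+z^2)+O(4)$ in the chart $x=1$ exhibits a small shrinking oval near each of the three coordinate points, while as $a\to -3^-$ one analyses the smoothing of the four-line curve $f_{-3}$.
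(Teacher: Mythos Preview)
Your argument is correct and complete. The paper itself offers no proof of this lemma beyond the sentence ``It is straightforward to prove the following two lemmas'', so there is nothing to compare your approach against; you have supplied what the authors omitted.

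A few remarks on specific points. Your treatment of the special values $a\in\{0,3,-3,-6\}$ and of the sign of $f_a$ for $a>0$ and $a<-6$ is clean and correct. For the range $-6<a<-3$, the key fact that the disk $\{Q_a>0\}$ is genuinely a disk (rather than the M\"obius side of the oval) follows because $Q_a$ has signature $(1,2)$: diagonalising, $\{Q_a>0\}$ lives entirely in one affine chart and is visibly a round disk there; you use this implicitly when concluding $C_a\subset\inter\Delta$.

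For the delicate gluing in the range $-3<a<0$, your deck-group bookkeeping is right. Concretely: labelling the four open triangles upstairs by sign patterns $(+{+}+)$, $(-{+}-)$, $(+{-}-)$, $(-{-}+)$, crossing the branch edge $\{v=0\}$ pairs $(+{+}+)\leftrightarrow(-{+}-)$ and $(+{-}-)\leftrightarrow(-{-}+)$, while crossing $\{w=0\}$ pairs $(+{+}+)\leftrightarrow(-{-}+)$ and $(-{+}-)\leftrightarrow(+{-}-)$; this yields a single $4$-cycle, hence one oval per arc, three in total. Your Ehresmann alternative (constancy of topological type on the smooth locus, then reading off three shrinking ovals as $a\to 0^-$) is equally valid and arguably tidier.

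One small wording issue: the paper's own statement is slightly loose at $a=0$ (it lists $f_0$ under ``strictly positive'' while also noting its three nodes, which are real zeros); your proof treats this case correctly on its own terms.
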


\begin{lemma}[Signature]\label{lem:signature}
The middle catalecticant $M_{f_{a}}$ has the following signature
\begin{itemize}
\item{} for $1<a $ it is $(6,0)$.
\item{} for $a = 1$ it is $(4,0)$.
\item{} for $-2< a< 1$ it  is $(4,2)$.
\item{} for $a=-2$ it  is $(2,3)$.
\item{} for $-2< a$ it  is $(3,3)$.

\end{itemize}
\end{lemma}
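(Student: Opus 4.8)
The plan is to exploit the evident block structure of $M_{f_a}$. In the ordering $\{x^2,y^2,z^2,xy,xz,yz\}$ the matrix decomposes as a direct sum $A\oplus I_3$, where
$$A=\begin{pmatrix} a&1&1\\ 1&a&1\\ 1&1&a\end{pmatrix}$$
is the block on $\langle x^2,y^2,z^2\rangle$ and the identity block sits on $\langle xy,xz,yz\rangle$. Hence the signature of $M_{f_a}$ equals the signature of $A$ plus $(3,0)$, and the whole question reduces to understanding $A$. I would write $A=(a-1)I_3+J$ with $J$ the all-ones $3\times 3$ matrix; since $J$ has eigenvalue $3$ on the line $\langle(1,1,1)\rangle$ and eigenvalue $0$ on the plane $\{t_1+t_2+t_3=0\}$, the matrix $A$ has eigenvalues $a+2$ (simple) and $a-1$ (multiplicity $2$). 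So the six eigenvalues of $M_{f_a}$, with multiplicity, are $a+2$, $a-1$, $a-1$, $1$, $1$, $1$, which is consistent with the value $\det M_{f_a}=(a-1)^2(a+2)$ recorded above.

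From here the signature in each range is obtained simply by tracking the signs of $a+2$ and $a-1$, the two critical values being $a=1$ (where $a-1$ is a double zero) and $a=-2$ (where $a+2$ is a simple zero); the three eigenvalues equal to $1$ are positive throughout and always contribute $(3,0)$. Concretely: for $a>1$ all six eigenvalues are positive, giving $(6,0)$; at $a=1$ two eigenvalues vanish, giving $(4,0)$; for $-2<a<1$ we have $a+2>0$ and $a-1<0$ (twice), giving $(4,2)$; at $a=-2$ one eigenvalue vanishes while $a-1=-3<0$ (twice), giving $(3,2)$ with a one-dimensional kernel; and for $a<-2$ both $a+2$ and the double $a-1$ are negative, giving $(3,3)$.

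There is no genuine obstacle here: the argument is a one-line diagonalisation of a circulant-type $3\times 3$ block followed by bookkeeping of signs, and the only point requiring attention is the behaviour exactly at the boundary values $a=1$ and $a=-2$, where one must record precisely which eigenvalue hits $0$. (An alternative, parallel to the characteristic-polynomial plus Descartes'-rule-of-signs computations used elsewhere in this section, would be to expand $\det(A-lI_3)$ and count sign changes; but diagonalising $A$ directly is cleaner and yields the multiplicities for free.)
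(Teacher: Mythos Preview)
Your argument is correct and complete. The paper itself offers no proof beyond the sentence ``It is straightforward to prove the following two lemmas,'' so there is nothing substantive to compare; your block decomposition $M_{f_a}=A\oplus I_3$ together with the eigenvalue computation $A=(a-1)I_3+J$ is exactly the kind of one-line verification the authors have in mind.

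One point worth flagging: your computation at $a=-2$ yields signature $(3,2)$ (three positive eigenvalues $1,1,1$, two negative eigenvalues $-3,-3$, and the single zero from $a+2$), whereas the stated lemma records $(2,3)$. Your value is the correct one --- this is a typo in the paper's statement, as is the range in the final item, which should read $a<-2$ rather than $-2<a$. Neither typo affects anything downstream: the only uses of this lemma in the later propositions concern the ranges $-2<a<1$ (signature $(4,2)$) and $a<-2$ (signature $(3,3)$), both of which you and the paper agree on.
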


Our main result in this section is the following
(in particular it proves that $6$ and $7$ are the only typical ranks in the class of even symmetric quartics).
\begin{theorem}\label{thm:evensymmetric}
The rank of $f_{a}$ is the following
\begin{itemize}
\item{} for $0<a $, $a\neq 1$, $\mathrm{rk}_{\mathbb R}(f_a)=6$.
\item{} for $a = 1$, $\mathrm{rk}_{\mathbb R}(f_a)=4$.
\item{} for $-3\le a\le 0 $, $\mathrm{rk}_{\mathbb R}(f_a)=7$.

\end{itemize}
\end{theorem}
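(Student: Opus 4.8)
The plan is to treat the three ranges separately, exploiting the explicit form of the middle catalecticant $M_{f_a}$ and the signature/shape information from Lemmas \ref{lem:signature} and \ref{lem:shape}. For the range $0<a$, $a\neq 1$: since $M_{f_a}$ has signature $(6,0)$ (i.e. $M_{f_a}\succ 0$) for $1<a$ and rank $6$ for $a>0$, I would shift $f_a$ by a suitable positive combination of $x^4,y^4,z^4$ so that the resulting catalecticant becomes positive semidefinite and singular; Reznick's Theorem \ref{thm:rez} then gives rank exactly $\mathrm{rk}(M)=6$ for the shifted form, hence rank $\le 6$ for $f_a$, while $\det M_{f_a}=(a-1)^2(a+2)\neq 0$ forces rank $\ge 6$. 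For $0<a<1$ one first checks that, despite the signature being $(4,2)$, a positive shift $f_a+c(x^4+y^4+z^4)$ lands in the positive-semidefinite locus (the $3\times 3$ block $\begin{pmatrix}a&1&1\\1&a&1\\1&1&a\end{pmatrix}$ has eigenvalues $a+2$ and $a-1$ twice, so adding $c$ to the diagonal and choosing $c$ so $a-1+c=0$ makes that block rank $1$ and PSD while the lower $3\times 3$ identity block stays positive), giving rank $\le 6$, again matched below by $\det M_{f_a}\neq 0$.

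For $a=1$: here $f_1=x^4+y^4+z^4+6(x^2y^2+x^2z^2+y^2z^2)=(x^2+y^2+z^2)^2$, which visibly has $M_{f_1}$ of rank $4$ and positive semidefinite, so Theorem \ref{thm:rez} gives $\mathrm{rk}_{\mathbb R}(f_1)=4$ directly (one should also exhibit a concrete decomposition as a sum of four $4$-th powers to confirm, which exists since $(x^2+y^2+z^2)^2$ is a sum of squares of linear forms whose squares can be recombined).

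For $-3\le a\le 0$: I need both a lower bound $\mathrm{rk}_{\mathbb R}(f_a)\ge 7$ and an upper bound $\le 7$. For the upper bound I would use the argument of Proposition \ref{prop:typquartic}: choose a point $Q$ so that $P_Q(f_a)$ is a ternary cubic of real rank $4$ with $Q$ off its constituent lines, write $f_a=\sum_{i=1}^4 c_i l_i^4 + g$ with $P_Q(g)=0$, so $g$ is effectively binary and has rank $\le 3$ (in fact I would aim for rank $\le 3$ here rather than $4$, using the specific symmetry, or accept $\le 4$ and then sharpen), yielding $\le 7$; alternatively one exhibits an explicit $7$-term decomposition by hand using the $S_3$-symmetry and Lemma \ref{c2r3}-type identities. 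For the lower bound I would invoke Corollary \ref{cor:line} together with Lemma \ref{lem:shape}: for $-3<a<0$ the curve consists of three ovals, so one can find a real line meeting $V_{\mathbb R}(f_a)$ in $4$ real points with at least $2$ distinct (e.g. a coordinate axis or a line near one), while by Lemma \ref{lem:signature} the signature is $(4,2)$ — but Corollary \ref{cor:line} requires signature $(5,1)$, so I actually need a variant: any real decomposition must have at least $4$ positive and at least $2$ negative powers by the signature $(4,2)$, combined with... hmm, that only gives $\ge 6$.

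The main obstacle, then, is the lower bound $\mathrm{rk}_{\mathbb R}(f_a)\ge 7$ in the range $-3\le a\le 0$, where the signature $(4,2)$ alone only yields $\ge 6$ and $\det M_{f_a}=(a-1)^2(a+2)$ actually vanishes at $a=-2$, so the catalecticant argument degenerates there. To push past $6$ I would argue that a rank-$6$ decomposition would force $M_{f_a}+S$ to have the wrong signature for every rank-$1$ catalecticant perturbation $S$ (as in the $2\times 2$-submatrix trick in Proposition \ref{prop:typ7}): examine $2\times 2$ principal submatrices of $M_{f_a}$ minus a PSD rank-one correction and show no choice makes the corrected matrix have signature $(4,2)$ realizable by $6$ fourth powers — equivalently, that $f_a$ cannot be written with exactly $4$ positive and $2$ negative terms, using the oval structure via Proposition \ref{prop:line} applied to several independent lines simultaneously, or a dimension count on the variety of such $6$-term decompositions over $\mathbb R$. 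I expect this exclusion of rank $6$ — reconciling the topological constraints from the three ovals with the signature constraints — to be the genuinely delicate part of the proof, and I would handle the endpoint $a=-3$ (four lines) and $a=0$ (three nodes) as possibly separate limiting cases by continuity plus a direct check.
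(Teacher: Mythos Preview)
Your plan has a concrete counting error in the range $0<a<1$. Adding $c(x^4+y^4+z^4)$ with $c=1-a$ does make the catalecticant positive semidefinite of rank $4$, so by Reznick the shifted form has rank $4$; but then $f_a$ equals that rank-$4$ form \emph{minus three} fourth powers, so this yields only $\mathrm{rk}_{\mathbb R}(f_a)\le 7$, not $\le 6$. (In fact this very shift is how the paper obtains the uniform upper bound $\mathrm{rk}_{\mathbb R}(f_a)\le 7$ for all $a$, which also handles the upper bound in the $-3\le a\le 0$ range more cleanly than the $P_Q$ argument you sketch.) To reach $\le 6$ for $0<a<1$ one must land on a PSD rank-$4$ catalecticant after adding only \emph{two} fourth powers, and a symmetric choice cannot do this; the paper constructs explicit non-symmetric $\ell_1,\ell_2$ and positive $c_1,c_2$ with $M_{f_a+c_1\ell_1^4+c_2\ell_2^4}$ of rank $4$.

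A minor slip: $f_1\neq(x^2+y^2+z^2)^2$; the double conic is $f_3/3$. Your conclusion for $a=1$ survives, since $M_{f_1}$ is PSD of rank $4$ regardless, and the paper exhibits the explicit decomposition $4f_1=(x+y+z)^4+(x-y-z)^4+(-x+y-z)^4+(-x-y+z)^4$.

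For the lower bound on $-3\le a\le 0$ you correctly flag this as the crux, but the sketch does not reach it, and there are two distinct obstacles. For $-2<a\le 0$ (signature $(4,2)$), a putative rank-$6$ decomposition means $f_a+\ell_1^4+\ell_2^4$ has PSD catalecticant for some $\ell_1,\ell_2$, so one must exclude a rank-\emph{two} PSD perturbation, not rank one as you write; the paper does this by restricting $M_{f_0}$ to the $3$-dimensional subspace $\{xL,yL,zL\}\subset\ker C$ (with $L$ the line through the two points dual to $\ell_1,\ell_2$) and checking a principal $2\times 2$ minor is nonpositive. For $-3\le a<-2$ the signature is $(3,3)$, so the entire positive-semidefinite framing collapses: a rank-$6$ decomposition would carry three plus and three minus signs, and no addition of fourth powers makes the catalecticant PSD. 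The paper handles this range with a genuinely different device: the quartic $\pi(f_a)(\alpha,\beta,\gamma)=\det M_{f_a+(\alpha x+\beta y+\gamma z)^4}-\det M_{f_a}$ is shown to be sign-definite on $\RR^3$, whereas a mixed-sign rank-$6$ decomposition would force $\pi(f_a)$ to take both values $\pm\det M_{f_a}$, a contradiction. Nothing in your outline---lines through ovals, Proposition~\ref{prop:line}, or $2\times 2$-minor tricks---substitutes for this in the $(3,3)$ regime.
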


\begin{remark}\label{rem:-3}
It seems to be possible to show that for $a<-3$ the rank of the corresponding quartic is always $6$. However our proof is highly computational and at present not suitable for inclusion in the article.
\end{remark}

\begin{remark}
By comparing Theorem \ref{thm:evensymmetric} with Lemma
\ref{lem:signature}, we see that there exist rank $7$ quartics with middle catalecticant having  the signature  $(4,2)$ or $(3,3)$.
Moreover, the example in Prop. \ref{prop:typ7} shows that there exist rank $7$ quartics with signature $(5,1)$.
In conclusion, there exist rank $7$ quartics with middle catalecticant having maximal rank and all possible indefinite signatures.
On the other hand, it is easy to produce rank $6$ quartics with middle catalecticant having maximal rank and all possible signatures,
just by adding general powers with appropriate signs.
\end{remark}

The easiest case $a=1$ follows by Lemma \ref{lem:signature} and by the identity
$$(x+y+z)^4+(x-y-z)^4+(-x+y-z)^4+(-x-y+z)^4=4f_1.$$ 

The proof of Theorem \ref{thm:evensymmetric} will be a consequence of the following Propositions
 \ref{prop:upbound}, \ref{prop:rkseven}, \ref{prop:rksix}, \ref{prop:invdisc}.

\begin{prop}\label{prop:upbound}
We have $\rank f_{a} \leq 7$.
\end{prop}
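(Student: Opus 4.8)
The plan is to exhibit, for every $a$ in the relevant range, an explicit linear combination of at most seven fourth powers of real linear forms equal to $f_a$, and the natural way to organize this is by the signature of $M_{f_a}$ recorded in Lemma~\ref{lem:signature}, combined with Reznick's Theorem~\ref{thm:rez}. First I would dispose of the range $a>1$: there $M_{f_a}$ is positive definite (signature $(6,0)$), so by Theorem~\ref{thm:rez} we have $\mathrm{rk}_\RR(f_a)=\rank M_{f_a}=6\le 7$; the boundary case $a=1$ is already handled by the displayed identity $(x+y+z)^4+(x-y-z)^4+(-x+y-z)^4+(-x-y+z)^4=4f_1$, giving rank $4$. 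The remaining and genuinely interesting range is $-3\le a\le 1$ (and we only need $\le 7$, not the exact value), where $M_{f_a}$ is indefinite and Reznick's positivity criterion does not apply directly.

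For the indefinite range the idea is to push $f_a$ into the positive-semidefinite cone by adding a controlled number of fourth powers, then invoke Theorem~\ref{thm:rez}. Concretely, I would look for real linear forms $\ell_1,\dots,\ell_k$ and positive scalars $c_i$ so that $g:=f_a+\sum_{i=1}^k c_i\ell_i^4$ has $M_g\succeq 0$ and $\rank M_g\le 7-k$; then $\mathrm{rk}_\RR(g)\le 7-k$ by Theorem~\ref{thm:rez}, hence $\mathrm{rk}_\RR(f_a)\le (7-k)+k=7$. Exploiting the $S_4$-symmetry and even structure, the most symmetric choice is to add multiples of $(x+y+z)^4,(x-y-z)^4,(-x+y-z)^4,(-x-y+z)^4$, i.e. a scalar multiple $t\cdot f_1$ (using the identity above), so that $M_{f_a+ t f_1}=M_{f_a}+t M_{f_1}$; one then analyzes for which $t>0$ this matrix becomes positive semidefinite and of low rank. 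Since $M_{f_a}$ is block diagonal with a scalar block $\tfrac13 I_3$ (always positive) and a $3\times 3$ block $\mathrm{circ}(a,1,1)$, the condition $M_{f_a}+tM_{f_1}\succeq 0$ reduces to a one-variable eigenvalue computation on the $3\times 3$ circulant block, whose eigenvalues are $a+2$ (eigenvector $(1,1,1)$) and $a-1$ (twice); one chooses $t$ to zero out the negative eigenvalue, producing a rank-$5$ positive semidefinite catalecticant and hence $\mathrm{rk}_\RR(f_a+tf_1)\le 5$, whence $\mathrm{rk}_\RR(f_a)\le 5+? $. The bookkeeping here — how many powers $f_1$ actually costs (it is a sum of four powers) versus how much the rank of the catalecticant drops — is what must be done carefully to land at $7$ rather than a weaker bound; a cleaner route may be to add only two or three suitably chosen powers $\ell_i^4$ (breaking some symmetry) so as to kill the single negative eigenvalue of $M_{f_a}$ while keeping the rank of the resulting psd catalecticant at $5$, giving $5+2=7$.

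The main obstacle I anticipate is controlling the \emph{rank} of the perturbed catalecticant simultaneously with its \emph{positivity}: adding fourth powers generically raises the rank, and one must choose the perturbing forms so that the added rank-one catalecticants lie in the right position relative to $M_{f_a}$ to both cancel the negative direction and keep the total rank at $5$ (so that Reznick gives rank $\le 5$ for the perturbation). A secondary subtlety is the boundary value $a=-3$, where by Lemma~\ref{lem:shape} the quartic degenerates into four lines and $M_{f_{-3}}$ has eigenvalues $-1$ (once) and $-4/3$-type entries — one should check the perturbation argument survives, or treat $a=-3$ separately using the explicit factorization into four real lines (each line contributing to an explicit small decomposition). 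In the writeup I would present the single perturbation family with an explicit choice of $t$ (or of two explicit forms $\ell_i$) valid uniformly for $-3\le a\le 1$, verify $M_{f_a+(\text{perturbation})}\succeq 0$ and has rank $5$ by the elementary circulant eigenvalue computation, and conclude via Theorem~\ref{thm:rez}; the case $a>1$ is the one-line remark above.
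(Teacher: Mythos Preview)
Your overall strategy---add a small number of fourth powers to push $f_a$ to a form with positive semidefinite catalecticant, then invoke Theorem~\ref{thm:rez}---is exactly the paper's approach. But your concrete execution has a gap.

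The perturbation $f_a + t f_1$ does \emph{not} work. On the $3\times 3$ circulant block, $M_{f_1}$ is $\mathrm{circ}(1,1,1)$, whose kernel is the plane orthogonal to $(1,1,1)$. That plane is precisely the $(a-1)$-eigenspace of $\mathrm{circ}(a,1,1)$. Hence $M_{f_a}+tM_{f_1}$ has circulant block $\mathrm{circ}(a+t,1+t,1+t)$ with eigenvalues $a+2+3t$ and $a-1$ (twice): the negative eigenvalue $a-1$ is completely unmoved for every $t$, so this sum is never positive semidefinite when $a<1$. Relatedly, for $-2<a<1$ there are \emph{two} negative eigenvalues (both equal to $a-1$), not one, so your fallback of ``adding two powers to kill the single negative eigenvalue and land at rank~$5$'' is also miscounted.

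The paper's fix is much simpler than either of your proposed routes and works uniformly for all $a$: add the three axis powers, setting
\[
g \;=\; f_a + (1-a)\bigl(x^4+y^4+z^4\bigr) \;=\; f_1.
\]
Now $M_g=M_{f_1}$ is positive semidefinite of rank $4$, so $\mathrm{rk}_\RR(g)=4$ by Theorem~\ref{thm:rez}, and $f_a=g-(1-a)(x^4+y^4+z^4)$ gives $\mathrm{rk}_\RR(f_a)\le 4+3=7$. This is exactly your framework with $k=3$ perturbing powers and target rank $7-k=4$; the point you missed is that the monomial powers $x^4,y^4,z^4$ (rather than the tetrahedral powers summing to $f_1$) are the right symmetric choice, since their catalecticants shift only the diagonal of the circulant block, turning $\mathrm{circ}(a,1,1)$ into $\mathrm{circ}(1,1,1)$ and collapsing its rank from $3$ to $1$. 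No separate treatment of $a>1$, $a=1$, or $a=-3$ is needed.
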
 
\begin{proof}
Let $g=f_{a}+(1-a)(x^4+y^4+z^4)$. We observe that the catalecticant matrix $M_g$ of $g$ is positive semidefinite of rank $4$ and therefore $\rank g=4$ 
(by Theorem \ref{thm:rez}) and $\rank f_{a} \leq 7$.
\end{proof}

\begin{prop}\label{prop:rkseven}
If $-2<a\le 0$ then $\rank f_a = 7$.
\end{prop}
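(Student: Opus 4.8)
The plan is to combine Proposition~\ref{prop:upbound}, which gives $\rank f_a\le 7$, with a proof of the reverse inequality $\rank f_a\ge 7$ by contradiction. Suppose $\rank f_a=6$. By Lemma~\ref{lem:signature} the middle catalecticant $M_{f_a}$ has signature $(4,2)$ for $-2<a\le 0$, so every expression of $f_a$ as a sum of six fourth powers of real linear forms has exactly four positive and two negative terms, say $f_a=\sum_{i=1}^{4}\ell_i^4-m_1^4-m_2^4$. Put $g:=f_a+m_1^4+m_2^4=\sum_{i=1}^{4}\ell_i^4$, so that $M_g=M_{f_a}+v_1v_1^{T}+v_2v_2^{T}$ is positive semidefinite of rank $\le 4$, where $v_j=(\alpha_j^2,\beta_j^2,\gamma_j^2,\alpha_j\beta_j,\alpha_j\gamma_j,\beta_j\gamma_j)$ is the rank one catalecticant of $m_j=\alpha_jx+\beta_jy+\gamma_jz$. (Conversely, by Theorem~\ref{thm:rez} the existence of real $m_1,m_2$ with $M_g\succeq0$ of rank $\le4$ would give $\rank f_a\le6$, so the claim is precisely that no such pair exists.) I would rule out such a pair by means of three obstructions.

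The first obstruction is the positive semidefiniteness of the $\{x^2,y^2,z^2\}$-principal block of $M_g$. Writing $u_j:=(\alpha_j^2,\beta_j^2,\gamma_j^2)\in\mathbb{R}^3_{\ge 0}$, that block is $A+u_1u_1^{T}+u_2u_2^{T}\succeq0$, where $A=(a-1)I_3+J$ ($J$ the all ones matrix) is the displayed top left block of $M_{f_a}$, with eigenvalue $a-1<0$ on $\mathbf{1}^{\perp}$. Testing against $w:=u_1\times u_2$ (orthogonal to $u_1$ and $u_2$) forces $(\mathbf{1}\cdot w)^2\ge(1-a)\,\|w\|^2$. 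On the other hand, since $u_1,u_2$ lie in the nonnegative orthant, $w$ never lies in the open positive or negative orthant unless $u_1\parallel u_2$, and an elementary computation gives $(\mathbf{1}\cdot w)^2<2\|w\|^2$ on the complement of those two orthants; hence $1-a<2$. The degenerate cases $u_1\parallel u_2$ and $\mathbf{1}\in\operatorname{span}(u_1,u_2)$ are immediate, since then the block above is $A$ plus a matrix of rank $\le1$, respectively a matrix vanishing on $\mathbf{1}^{\perp}$, hence not positive semidefinite. This already settles the range $-2<a\le-1$, and for $-1<a<0$ it confines $u_1,u_2$, hence the lines $m_1,m_2$ and the point $q:=\{m_1=m_2=0\}$, to a neighbourhood of one common coordinate line.

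The second and third obstructions concern $q$. Since $g=\sum\ell_i^4\ge0$ and $g(q)=f_a(q)$, we have $f_a(q)\ge0$; and $f_a(q)=0$ is impossible, for then all six lines $\ell_i,m_j$ vanish at $q$, forcing $f_a$ to have a point of multiplicity four, contrary to Lemma~\ref{lem:shape} (for $-2<a<0$ the curve is smooth, and $f_0$ has only nodes). Hence $f_a(q)>0$, so $q$ lies outside all three ovals of $V_{\mathbb{R}}(f_a)$. Moreover, for any line $L\ni q$ with $L\ne m_1,m_2$, the restrictions $m_1|_L$ and $m_2|_L$ vanish at $q\in L$ and so are proportional; thus the restriction of the decomposition to $L$ has at most one negative fourth power, and Proposition~\ref{prop:line} then forbids $L$ from meeting $V_{\mathbb{R}}(f_a)$ in four real points at least two of which are distinct. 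So at most two lines through $q$ can do so. Finally I would derive a contradiction from these facts: the coordinate lines $\{x=0\},\{y=0\},\{z=0\}$ and their small perturbations meet $V_{\mathbb{R}}(f_a)$ in four distinct real points (the binary quartic $a(t^4+s^4)+6t^2s^2$ has four distinct real roots for $-3<a<0$), and combining the first obstruction (which places $m_1,m_2$ and $q$ near one coordinate line, say $\{z=0\}$) with the second (which keeps $q$ off the small ovals around $[1:0:0]$ and $[0:1:0]$) produces infinitely many lines through $q$, other than $m_1$ and $m_2$, meeting those two ovals in four distinct real points --- contradicting the third. The case $a=0$ then follows by a limit from $a<0$. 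The main obstacle is this last gluing step for $a$ close to $0$, where the first obstruction is weak and the ovals have radius $\sim\sqrt{|a|}$: one needs a quantitative comparison between the $a$-dependent clustering forced on $m_1,m_2$ and the $a$-dependent size of the ovals.
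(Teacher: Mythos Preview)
Your overall framework matches the paper's: the upper bound comes from Proposition~\ref{prop:upbound}, and for the lower bound one assumes a decomposition with four positive and two negative fourth powers and shows that $M_{f_a}+v_1v_1^T+v_2v_2^T$ can never be positive semidefinite. From there, however, the two arguments diverge, and yours has real gaps.

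Your first obstruction (testing the $\{x^2,y^2,z^2\}$-block against $w=u_1\times u_2$) does dispose of $-2<a<-1$, but the claimed strict inequality $(\mathbf 1\cdot w)^2<2\|w\|^2$ fails on the boundary $w\propto(1,1,0)$, so $a=-1$ needs a separate check. More seriously, for $-1<a\le 0$ your plan does not close. The assertion that the first obstruction ``confines $m_1,m_2$ and $q$ to a neighbourhood of one common coordinate line'' is not substantiated: as $a\to 0^-$ the cone $(\mathbf 1\cdot w)^2\ge (1-a)\|w\|^2$ widens to $(\mathbf 1\cdot w)^2\ge\|w\|^2$, which admits e.g.\ $u_1$ near $(1,0,0)$ and $u_2$ near $(0,1,0)$ --- so $m_1,m_2$ need not be close to any common coordinate line. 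The quantitative ``gluing'' you flag as the main obstacle is therefore not a matter of bookkeeping but the heart of the proof, and you have not supplied it. Finally, deducing the case $a=0$ ``by a limit from $a<0$'' is invalid: the condition $\rank f_a\ge 7$ is not closed under specialization, and at $a=0$ the three ovals collapse to the nodes of $f_0$, so every line through $q$ meets $V_{\mathbb R}(f_0)$ in at most three distinct real points and your third obstruction is vacuous.

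The paper avoids all of this with two short ideas. First, monotonicity: since $M_{f_a}=M_{f_0}+a\,\mathrm{diag}(1,1,1,0,0,0)$ with $a\le 0$, it suffices to show that $M_{f_0}+C$ is never positive semidefinite for $C$ the catalecticant of $m_1^4+m_2^4$. Second, instead of the $\{x^2,y^2,z^2\}$-block, one tests $M_{f_0}$ on the $3$-dimensional subspace $\{xL,yL,zL\}\subset\ker C$, where $L=rx+sy+tz$ is the line through the two points $(\alpha_j,\beta_j,\gamma_j)$. The resulting $3\times 3$ Gram matrix has a principal $2\times 2$ minor equal to $-3r^2s^2+t^4+s^2t^2+r^2t^2$, which (after ordering $r\ge s\ge t$) is visibly $\le 0$. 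This single computation handles the entire range $-2<a\le 0$ at once.
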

\begin{proof}
By Lemma \ref{lem:signature} the signature of $M_{f_{a}}$ is $(4,2)$, hence
 we know that $\rank f_{a} \geq 6$. Suppose that $\rank f_{a}=6$. Then $f_{a}$ is a linear combination of $6$ fourth powers of linear forms, with $4$ 
fourth powers having positive coefficients and $2$ fourth powers with negative coefficients.

For $i=1,2$ let $\ell_i$ be a real linear form: $\ell_i=\alpha_ix+\beta_iy+\gamma_iz$. To establish that $\rank f_{a} \geq 7$ it suffices to show that 
for any two linear forms $\ell_i$ the catalecticant matrix of $f_{a}+\ell_1^4+\ell_2^4$ is not positive semidefinite. We will show that this holds for $a=0$,
 which then establishes the claim for all $a\le 0$.

Let $C$ be the catalecticant matrix of $\ell_1^4+\ell_2^4$. Since $M_{0}$ is a matrix of full rank, it suffices to show that $M_{0}$ 
is not positive definite on the kernel of $C$.
The kernel of $C$ consists of all quadratic forms vanishing on the points $v_1=(\alpha_1,\beta_1,\gamma_1)$ and $v_2=(\alpha_2,\beta_2,\gamma_2)$. 
Let $L=rx+sy+tz$ be the unique line through $v_1$ and $v_2$ and let $V$ be the subspace of $\ker C$ consisting of forms $xL,yL,zL$. 
Restricting the quadratic form $M_{0}$ to the subspace $V$ we get the following matrix with respect to the basis $xL,yL,zL$:
$$M_{0}'=\begin{pmatrix} s^2+t^2&2rs&2st\\2rs&r^2+t^2&2st\\2rt&2st&r^2+s^2\end{pmatrix}.$$
Without loss of generality we may assume that $r\geq s\geq t$. The $(1,2)$ principal minor of $M_{0}'$ is $-3r^2s^2+t^4+s^2t^2+r^2t^2$, 
which is nonpositive by our assumption on $r,s,t$. Therefore $M_{0}$ is not positive definite on the kernel of $C$, which completes the proof.
\end{proof}

\begin{prop}\label{prop:rksix}
Suppose that $a>0$. Then $\rank f_{a} \leq 6$.
\end{prop}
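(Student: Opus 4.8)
The plan is to exhibit, for each $a>0$, an explicit expression of $f_a$ as a linear combination of six fourth powers of real linear forms. The natural first move is to exploit the $S_3$-symmetry and the sign-change symmetry of $f_a$: one expects a symmetric ansatz, writing $f_a$ as a combination of the cube $x^4+y^4+z^4$ (rank $3$, the orbit of size $3$ under coordinate permutations consisting of the ``coordinate'' powers) together with a second orbit of three linear forms of the shape $\ell_1 = x+ty+tz$, $\ell_2 = tx+y+tz$, $\ell_3=tx+ty+z$ for a real parameter $t$, each entering with a common coefficient $c$. Thus one sets
$$
f_a = c_0\,(x^4+y^4+z^4) + c\,(\ell_1^4+\ell_2^4+\ell_3^4),
$$
expands the right-hand side, and matches the coefficients of $x^4$ (equivalently $y^4,z^4$) and of $x^2y^2$ (equivalently $x^2z^2,y^2z^2$); the monomials $x^3y$, $x^2yz$, etc.\ vanish automatically by the symmetry of the ansatz. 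This yields two equations,
$$
c_0 + c(1+2t^4) = a, \qquad 6c t^2(1+t^2) = 6,
$$
in the three unknowns $c_0,c,t$ (the second comes from the $x^2y^2$ coefficient, where the contribution of $\ell_1^4$ is $6t^4$, of $\ell_2^4$ is $6t^2$, of $\ell_3^4$ is $6t^2$, totalling $6t^2(1+t^2)$, so $c\cdot 6t^2(1+t^2)=6$, i.e.\ $c t^2(1+t^2)=1$). One then has a one-parameter family of solutions and must check that for every $a>0$ one can choose $t$ real so that the resulting $c_0$ and $c$ are real — which is automatic since $c=1/(t^2(1+t^2))$ and $c_0 = a - c(1+2t^4)$ are real for every real $t\neq 0$ — and so that the six linear forms are genuinely distinct (so that the rank is at most $6$ and not accidentally smaller, though for the statement ``$\le 6$'' even a degenerate case is harmless).

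The one genuine subtlety, and the step I expect to be the main obstacle, is making sure the construction is valid precisely on the range $a>0$ and not, say, only for $a$ in some subinterval: as $t$ ranges over $\RR\setminus\{0\}$ the quantity $c(1+2t^4) = (1+2t^4)/(t^2(1+t^2))$ traces out some subset of $\RR$, and $c_0 = a$ minus this quantity; one must verify that for each $a>0$ there is a choice of $t$ giving a legitimate decomposition. In fact the cleanest route is probably to not insist on the most symmetric ansatz but to allow the ``coordinate'' powers $x^4+y^4+z^4$ to enter with either sign and to track exactly which $a$ are reached; alternatively, since Proposition \ref{prop:upbound} already gives $\rank f_a\le 7$ via $g=f_a+(1-a)(x^4+y^4+z^4)$ of rank $4$, one can instead aim to write $f_a$ minus a suitable real multiple of $(x+y+z)^4+(x-y-z)^4+(-x+y-z)^4+(-x-y+z)^4 = 4f_1$ as a form with positive semidefinite middle catalecticant of rank $\le 2$, hence rank $\le 2$ by Theorem \ref{thm:rez}, giving total rank $\le 4+2 = 6$. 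Concretely, $f_a - 4\mu f_1 = (a-4\mu)(x^4+y^4+z^4) + (6-24\mu)(x^2y^2+x^2z^2+y^2z^2)$, whose middle catalecticant is of the same shape as $M_{f_a}$ with $a$ replaced by a rescaled parameter, and one chooses $\mu$ so that this catalecticant is positive semidefinite of low rank; from Lemma \ref{lem:signature} the catalecticant of $f_b$ is positive semidefinite exactly for $b\ge 1$, and singular (rank $4$) at $b=1$, so taking the rescaled parameter equal to $1$ pins down $\mu$ and one checks $a>0$ is exactly the range in which the required $\mu$ keeps $a-4\mu>0$ so that the leftover form is a genuine (nonzero) $f_b$ with $b\ge 1$.

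I would carry out the steps in this order: (1) fix the decomposition strategy — I lean toward the second, catalecticant-based one, since it reuses Theorem \ref{thm:rez} and the already-tabulated Lemma \ref{lem:signature} and avoids a brute-force polynomial identity; (2) solve for the scalar $\mu$ (or, in the first approach, for $t,c,c_0$) as a function of $a$; (3) verify that the ``leftover'' piece has a positive semidefinite middle catalecticant of rank $\le 2$ (resp.\ that the six forms are real), invoking Theorem \ref{thm:rez}; (4) confirm that the construction covers the entire range $a>0$, paying particular attention to the boundary behaviour near $a=0$ and near $a=1$ (where $f_1$ itself drops to rank $4$, consistent with the stronger statement proved separately). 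The computations in steps (2)–(3) are short $2\times 2$ or $3\times 3$ determinant checks and I would not spell them out beyond indicating the matrix involved.
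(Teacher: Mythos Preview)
Both of your proposed approaches contain genuine gaps.

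\textbf{Approach 1 (symmetric ansatz).} Your claim that the monomials $x^3y$, $x^2yz$, etc.\ vanish automatically is false. The ansatz $c_0(x^4+y^4+z^4)+c(\ell_1^4+\ell_2^4+\ell_3^4)$ with $\ell_1=x+ty+tz$, $\ell_2=tx+y+tz$, $\ell_3=tx+ty+z$ is $S_3$-symmetric but \emph{not} invariant under sign changes $x\mapsto -x$, etc.; the form $f_a$ has the full octahedral symmetry, and the difference matters. A direct computation gives the coefficient of $x^3y$ in $\ell_1^4+\ell_2^4+\ell_3^4$ as $4t+4t^3+4t^4$, which is nonzero for generic real $t\neq 0$. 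So the right-hand side cannot equal $f_a$, and the two equations you wrote down are not the whole system.

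\textbf{Approach 2 (subtract a multiple of $4f_1$).} The difference $f_a-4\mu f_1=(a-4\mu)(x^4+y^4+z^4)+(6-24\mu)(x^2y^2+x^2z^2+y^2z^2)$ is again an even symmetric quartic, and from the block shape of the catalecticant displayed before Lemma~\ref{lem:shape} one sees that a \emph{nonzero} even symmetric quartic has catalecticant of rank $\ge 3$: the lower $3\times 3$ block is a nonzero multiple of the identity unless the $x^2y^2$-coefficient vanishes, and in that case the form is a multiple of $x^4+y^4+z^4$, of rank exactly $3$. Thus the leftover can never have rank $\le 2$; the best you achieve is $\mu=\tfrac14$, leftover $(a-1)(x^4+y^4+z^4)$ of rank $3$, giving total $\le 7$ --- which merely reproves Proposition~\ref{prop:upbound}. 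Your final paragraph conflates ``leftover has catalecticant rank $\le 2$'' with ``leftover is $f_b$ for some $b\ge 1$'', but $f_1$ already has catalecticant rank $4$, so even that route yields only $4+4=8$.

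\textbf{What the paper does.} For $a\ge 1$ the catalecticant $M_{f_a}$ is already positive semidefinite (Lemma~\ref{lem:signature}), so Theorem~\ref{thm:rez} gives $\mathrm{rk}_{\mathbb R}(f_a)\le 6$ directly. The real work is $0<a<1$, where the signature is $(4,2)$. Here the paper \emph{deliberately breaks the symmetry}: it exhibits two explicit linear forms $\ell_1,\ell_2$ and positive scalars $c_1,c_2$ (depending on $a$) such that $g=f_a+c_1\ell_1^4+c_2\ell_2^4$ has positive semidefinite catalecticant of rank $4$, hence $\mathrm{rk}_{\mathbb R}(g)=4$ and $\mathrm{rk}_{\mathbb R}(f_a)\le 6$. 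The point is that two added fourth powers with positive coefficients are exactly what is needed to kill the two negative eigenvalues, and there is no reason to expect a symmetry-respecting choice to accomplish this.
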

\begin{proof}
For $a\geq 1$ the catalecticant matrix $M_{a}$ is positive semidefinite and the Proposition follows from
Theorem \ref{thm:rez}.
So we may assume $0< a< 1$. 

Let $$\omega=\frac{\sqrt{6}-\sqrt{2}}{2}, \,\, \,\,\,\,\,\, \ell_1=\omega x+z \,\,\,\,\,\, \text{and} \,\,\,\,\,\, 
c_1=\frac{(1-a)(2+a)}{(2-\sqrt{3})a(a+5)}.$$

We note that $c_1>0$ for $0<a <1 $. Let 

$$\omega_2=\frac{-\sqrt{6}-\sqrt{2}}{2},\,\,\,\,\,\, \,\,\,\,\,\, \omega_3=\sqrt{\frac{(3+\sqrt{3})a(1-a)}{6}} \,\,\,\,\,\,\,\,\, \text{and} \,\,\,\,\,\,\,\,\, \ell_2=\omega_2x+\omega_3y+z.$$
Finally, let $$c_2=\frac{6(1-a)}{(\sqrt{3}+2)a(3-a)(1+6a-a^2)}.$$
We note that for $0<a<1$, $\omega_3$ is a well-defined real number and $c_2>0$. Let $$g=f_a+c_1\ell_1^4+c_2\ell_2^4.$$ 
It is possible to show via direct computation that the catalecticant matrix $M_g$ of $g$ has rank $4$ for all $0<a<1$. Then $M_g$ is positive semidefinite, since the matrix $M_a$ has signature $(4,2)$. Therefore $g$ has rank $4$ by Theorem \ref{thm:rez} and $f_a$ has rank $6$.
\end{proof}

\subsection{Discriminant Pullback Hypersurface}  Let $f \in \RR[x,y,z]_4$. Let $\ell=\alpha x+\beta y+ \gamma z$. Consider the determinant of the middle catalecticant of $f+\ell^4$: $\det M_{f+\ell^4}$. Since the middle catalecticant of $\ell^4$ has rank $1$ we have:  
$$\det M_{f+\ell^4}=\det M_f+\tilde{f},$$
where $\tilde{f}$ is a ternary quartic in $\alpha,\beta$ and $\gamma$. Therefore we may define a map
$$\pi: \RR[x,y,z]_4 \rightarrow \RR[\alpha,\beta,\gamma]_4, \hspace{5mm} \pi(f)=\tilde{f}.$$
Note that coefficients of $\tilde{f}$ are quintic forms in the coefficients of $f$, coming from maximal minors of $M_f$. Therefore we have $\pi(-f)=-\pi(f)$.

\begin{prop}\label{prop:invdisc}
Suppose $-3\le a< -1$. Then  $$\rank f_{a} = 7.$$ 
\end{prop}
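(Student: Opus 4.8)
\textbf{Proof proposal for Proposition \ref{prop:invdisc}.}

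The plan is to combine the upper bound $\rank f_a \le 7$ from Proposition \ref{prop:upbound} with a lower bound $\rank f_a \ge 7$ valid in the range $-3 \le a < -1$. By Lemma \ref{lem:signature} the signature of $M_{f_a}$ is $(3,3)$ for $-2 < a < 1$ and $(2,3)$ at $a = -2$, so in all cases $M_{f_a}$ has rank at least $5$ and certainly $\rank f_a \ge 5$; the signature alone does not force rank $7$, since a decomposition into $6$ powers with $3$ positive and $3$ negative signs is not a priori excluded. So the real work is to rule out rank $6$. The idea I would pursue is the one the preceding subsection is visibly set up for: use the discriminant pullback map $\pi$. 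If $f_a = \sum_{i=1}^6 \pm \ell_i^4$ were a rank $6$ decomposition, then writing $f_a = \ell_1^4 + g$ (absorbing a sign and scalar into $\ell_1$) where $g$ has rank $5$, the matrix $M_g = M_{f_a} - M_{\ell_1^4}$ is a rank $\le 5$ catalecticant, i.e. $\det M_g = 0$. Hence $\det M_{f_a - \ell_1^4} = 0$, which by the definition of $\pi$ says exactly that the coefficient vector $v_1 = (\alpha_1,\beta_1,\gamma_1)$ of $\ell_1$ is a real point of the quartic curve $\det M_{f_a} - \pi(f_a)(\alpha,\beta,\gamma) = 0$ in $\PP^2$. (One must be mildly careful about signs: replacing $\ell_1^4$ by $-\ell_1^4$ changes the rank-one correction, so one should run the argument once for $f_a - \ell^4$ and once for $f_a + \ell^4$; by $\pi(-f)=-\pi(f)$ and evenness these are governed by closely related curves.)

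So the first step is to compute $\det M_{f_a}$ and the quartic $\pi(f_a)$ explicitly as polynomials in $a$ and $\alpha,\beta,\gamma$. We already have $\det M_{f_a} = (a-1)^2(a+2)$ from the excerpt. Computing $\pi(f_a)$ amounts to computing $\det M_{f_a + \ell^4} - \det M_{f_a}$, a single symbolic determinant of a $6\times 6$ matrix with the rank-one perturbation $v v^T$, $v = (\alpha^2,\beta^2,\gamma^2,\alpha\beta,\alpha\gamma,\beta\gamma)$ added; by the matrix determinant lemma this is $\det M_{f_a}\cdot (v^T M_{f_a}^{-1} v)$ when $M_{f_a}$ is invertible, so $\pi(f_a)(\alpha,\beta,\gamma) = \det(M_{f_a}) \, v^T M_{f_a}^{-1} v$, a manifestly computable quartic. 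The second step is to show that for $-3 \le a < -1$ the curve $\{\det M_{f_a} - \pi(f_a) = 0\} \cup \{\det M_{f_a} + \pi(f_a)=0\}$ (the second coming from the $+\ell^4$ case) has no real points; equivalently that $\pm\pi(f_a) < \det M_{f_a}$ (or the reverse, with the appropriate strict inequality) fails to have a real solution. Concretely: in this range $\det M_{f_a}=(a-1)^2(a+2)$ is negative (since $a+2<1$, and $<0$ once $a<-2$; at $a=-2$ it is $0$ and one handles that endpoint separately, noting there the signature drops to $(2,3)$ so rank $\ge 5$ and one argues rank $6$ directly), while $\pi(f_a)(\alpha,\beta,\gamma) = \det(M_{f_a})\,(v^TM_{f_a}^{-1}v)$ — so the curve equation becomes $\det(M_{f_a})(1 - v^T M_{f_a}^{-1}v) = 0$, i.e. $v^T M_{f_a}^{-1} v = \mp 1$, and one needs to show the quadratic-in-$v$ form $v \mapsto v^T M_{f_a}^{-1} v$ never takes the value $+1$ nor $-1$ for real $(\alpha,\beta,\gamma)$, or more precisely never takes the value forced by a genuine semidefinite-preserving decomposition. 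Since $M_{f_a}^{-1}$ has the same signature as $M_{f_a}$, the form $v^T M_{f_a}^{-1} v$ is an indefinite quaternary-type form restricted to the Veronese $v(\alpha,\beta,\gamma)$, and one shows by a sign/positivity analysis (as in the proofs of Prop. \ref{prop:rkseven} and the two-conics example) that its range on the Veronese cone avoids the offending value.

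The main obstacle I anticipate is precisely this last range analysis: showing that the pullback quartic $\det M_{f_a} \mp \pi(f_a)$ has empty real zero locus for \emph{every} $a$ in the interval $[-3,-1)$ uniformly, rather than for isolated sample values. The structural facts that should make it work are (i) the $S_3 \times (\ZZ/2)^3$ symmetry of $f_a$, which forces $\pi(f_a)$ to itself be an even symmetric quartic in $\alpha,\beta,\gamma$, hence of the form $A(\alpha^4+\beta^4+\gamma^4) + B(\alpha^2\beta^2+\cdots)$ with $A,B$ explicit functions of $a$ — so the emptiness question reduces to a one-variable inequality in $t = \beta^2/\alpha^2$ etc., amenable to Descartes' rule of signs exactly as in the two-conics example in the excerpt; and (ii) at the endpoint $a=-3$, where Lemma \ref{lem:shape} says $f_{-3}$ splits into four real lines, one can cross-check and if necessary argue separately that this hyperbolic-type configuration has rank $7$ using Corollary \ref{cor:hyp} or a direct decomposition count. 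Once the pullback quartic is shown to be (say) negative definite on the Veronese for all $a \in [-3,-1)$, no rank-one correction can make $M_{f_a}$ drop to rank $5$ with a real apex, rank $6$ is excluded, and with Proposition \ref{prop:upbound} we conclude $\rank f_a = 7$.
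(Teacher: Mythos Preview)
Your setup is right --- compute $\pi(f_a)$, exploit the $S_3\times(\mathbb{Z}/2)^3$ symmetry to see it is again an even symmetric quartic, and show it is sign-definite on $\RR^3\setminus\{0\}$ for $-3\le a<-1$ --- but the logical step you draw from definiteness is wrong, and this is a genuine gap rather than a detail.

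You claim that once $\pi(f_a)$ is (say) negative definite, ``no rank-one correction can make $M_{f_a}$ drop to rank $5$ with a real apex,'' and earlier that one must show the affine loci $\{\det M_{f_a}\pm\pi(f_a)=0\}$ are \emph{both} empty. This is false and unprovable: $\pi(f_a)$ is homogeneous of degree $4$ in $(\alpha,\beta,\gamma)$, so by rescaling $\ell$ the value $\pi(f_a)(\alpha,\beta,\gamma)$ sweeps an entire half-line. Concretely, for $-3<a<-2$ one has $\det M_{f_a}=(a-1)^2(a+2)<0$; if $\pi(f_a)$ is negative definite then $\det M_{f_a}-\pi(f_a)(\alpha,\beta,\gamma)$ (which governs $\det M_{f_a-\ell^4}$) runs from $\det M_{f_a}<0$ up to $+\infty$ as $\ell$ scales, so it \emph{does} vanish for many real $\ell$. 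Thus one of your two ``curves'' always has real points, and your formulation ``$v^TM_{f_a}^{-1}v$ never takes the value $\pm1$ on the Veronese'' cannot hold.

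What actually closes the argument is the observation you never make: because the signature of $M_{f_a}$ is indefinite (either $(3,3)$ or $(4,2)$ in this range), any length-$6$ decomposition $f_a=\sum_{i=1}^6 c_i m_i^4$ with $c_i=\pm1$ must contain \emph{both} a term with $c_i=+1$ and a term with $c_j=-1$. Peeling off one of each gives
\[
\tilde f_a(\alpha_i,\beta_i,\gamma_i)=+\det M_{f_a}\quad\text{and}\quad \tilde f_a(\alpha_j,\beta_j,\gamma_j)=-\det M_{f_a},
\]
so $\pi(f_a)$ takes values of opposite sign (using $\det M_{f_a}\neq 0$), contradicting its definiteness. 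This is exactly the paper's argument: the point is not that neither level set is hit, but that a rank-$6$ decomposition would force \emph{both} to be hit. Your computational program (identify $\pi(f_a)$ as a scalar multiple of some $f_{\alpha(a)}$ and invoke Lemma~\ref{lem:shape} to get definiteness) is the right way to establish the needed input; you just need to replace the faulty ``no real apex'' conclusion with this sign-change contradiction.
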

\begin{proof}
We compute $$\pi(f_a)=(a^2-1)(\alpha^4+\beta^4+\gamma^4)+(a-1)(a^2+a-4)(\alpha^2\beta^2+\alpha^2\gamma^2+\beta^2\gamma^2).$$
We want to show that, under our assumptions, $\pi(f_a)$ always has constant sign. Indeed, up to a scalar multiple, $\pi(f_a)=f_\alpha$ with $\alpha=\frac{6(a+1)}{a^2+a-4}$, unless $a=\frac{-1-\sqrt{17}}{2}$,
and in this last case $\pi(f_a)$ has constant sign. Moreover, if $-3\le a <  \frac{-1-\sqrt{17}}{2}$ then $\alpha<0$ and $\pi(f_a)$ has constant sign
by Lemma \ref{lem:shape}.
Last, if $ \frac{-1-\sqrt{17}}{2}< a <-1$ then $\alpha>0$ and $\pi(f_a)$ has constant sign by Lemma \ref{lem:shape}.

Suppose that $\rank f_a=6$ and $f_a=\sum c_im_i^4$, with $c_i=\pm 1$ and $m_i=\alpha_ix+\beta_iy+\gamma_iz$. 
Since the signature is $(3,3)$, and the signs of $c_i$ agree with the signature of $f_a$,
we may assume $c_1=-1$ and $c_2=1$. Furthermore $f_a-c_im_i^4$ has middle catalecticant of rank $5$ for all $i$. 
It follows that $\det(M_{f_a-c_im_i^4})=0$. Therefore we have $$\det(M_{f_a+m_1^4})=0 \hspace{.5cm} \text{and} \hspace{.5cm} \tilde{f_a}(\alpha_1,\beta_1,\gamma_1)=-\det M_{f_a}.$$
On the other hand we also have $$\det(M_{f_a-m_2^4})=\det(M_{-f_a+m_2^4})=0 \hspace{.5cm} \text{and} \hspace{.5cm} \widetilde{(-f_a)}(\alpha_2,\beta_2,\gamma_2)=-\tilde{f_a}(\alpha_2,\beta_2,\gamma_2)=-\det M_{f_a}.$$
Therefore we see that $\pi(f_a)$ changes signs on $\RR^3$, which is a contradiction.
\end{proof}

The proof of Theorem \ref{thm:evensymmetric} is now complete.

\begin{remark} For $2\le a$
we have the decomposition
$$(\alpha x+y)^4+(\alpha x-y)^4+(\alpha y+z)^4+(\alpha y-z)^4+(\alpha z+x)^4+(\alpha z-x)^4=f_{a}$$
with $a=\frac{2\alpha ^4+2}{2\alpha ^2}$.

Note that $a=\frac{2\alpha ^4+2}{2\alpha ^2}$,  can be satisfied only for
$2\le a$.
The case $a=2$ corresponds to $\alpha=1$, and the six summands 
(and their opposite) correspond to the $12$ medium points on edges of cube (cubic decomposition).

The case $a=3$ corresponds to $\alpha=\pm\frac{1+\sqrt{5}}{2}$ (golden ratio), we have the double conic $(x^2+y^2+z^2)^2$ and the six summands 
(and their opposite) correspond to the $12$ vertices of regular icosahedron
(icosahedral decomposition), in agreement with \cite[Theor. 9.13]{Rez}.
\end{remark}

\begin{remark} In case $a=-2$, the quartic $f_{-2}$ has complex border rank $5$ (it is called Clebsch) and has the following complex
symmetric decomposition with six summands

$4f_{-2}=(x+iy)^4+(x+iz)^4+(y+iz)^4+(y+ix)^4+(z+ix)^4+(z+iy)^4$.

Note that each of the $6$ summands correspond to a line tangent to the conic $x^2+y^2+z^2$.
It can be proved that $f_{-2}$ and $f_{\infty}=x^4+y^4+z^4$ are the only even symmetric quartics admitting a symmetric decomposition.
\end{remark}

\section{Ternary Quintics}\label{quintics}

In this section we compute the typical ranks for points $f\in \mathbb{P}(S^5\mathbb{R}^3)$ with respect to the Veronese variety $X=v_5(\P 2_{\mathbb{R}})$. 

Let  $f\in S^5\RR^3$. 
As for cubic surfaces, we have uniqueness of the general decomposition
$f=\sum_{i=1}^7l_i^5$.
So there are four possibilities, all occurring for $f$
lying in some set of positive volume:

\begin{enumerate}[(i)]

\item\label{(i)} all $l_i$ are real,

\item\label{(ii)} five $l_i$ are real,

\item\label{(iii)} three $l_i$ are real,

\item\label{(iv)} one $l_i$ is real.
\end{enumerate}
In the case (\ref{(i)}) the real rank is $7$. In the case (\ref{(ii)}) the real rank is between $8$ and $10$,
because we can substitute two conjugate fifth powers with five real fifth powers. This shows the following

\begin{prop} $7$ and $8$ are typical ranks for ternary quintics.
\end{prop}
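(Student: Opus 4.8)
The plan is to establish the two typicality claims separately, using the classification of the four generic decomposition types (\ref{(i)})--(\ref{(iv)}) for ternary quintics that has just been recalled. Recall that a general $f\in S^5\RR^3$ has the unique complex decomposition $f=\sum_{i=1}^7 l_i^5$, and the complex points $l_i$ come in conjugate pairs since $f$ is real; hence exactly one of the four cases occurs, and each occurs on a set of positive volume (this follows because each case is stable under small perturbations: the number of real points among the $l_i$ is locally constant on the locus where the decomposition is smooth and unique).

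First I would handle rank $7$. In case (\ref{(i)}) all seven $l_i$ are real, so $f=\sum_{i=1}^7 l_i^5$ is an honest real decomposition of length $7$; since the generic complex rank of a ternary quintic is $7$ (Alexander--Hirschowitz: $\lceil \binom{7}{5}/3\rceil = 7$), no shorter real decomposition exists, so $\mathrm{rk}_\RR(f)=7$ on this set. As case (\ref{(i)}) has nonempty interior, $7$ is a typical rank; alternatively this is exactly the statement that the lowest typical real rank equals the generic complex rank \cite[Theorem 2]{BT}.

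Next I would handle rank $8$. In case (\ref{(ii)}) five of the $l_i$ are real, say $l_1,\dots,l_5$, and the remaining two are complex conjugates, $l_7=\overline{l_6}$. Writing $l_6 = u + iv$ with $u,v\in S^1\RR^3$ real linear forms, I use the degree-$5$ analogue of Lemma \ref{c2r3}: the binary identity
$$(x+iy)^5+(x-iy)^5 = -\tfrac{5}{2}\bigl[(x+y)^5+(x-y)^5\bigr]+\tfrac{?}{}\,x^5+\dots$$
— more precisely, any real binary quintic has real rank at most $5$ (the maximal typical rank of binary quintics is $5$ by \cite{CO}), so $l_6^5+l_7^5 = l_6^5+\overline{l_6}^5$ is a real binary form (a form in the two real variables $u,v$) and hence is a sum of at most $5$ real fifth powers. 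This yields a real decomposition of $f$ of length at most $5+5=10$, so $8\le \mathrm{rk}_\RR(f)\le 10$ on case (\ref{(ii)}), which has nonempty interior; the lower bound $\mathrm{rk}_\RR(f)\ge 8$ holds because a real decomposition of length $7$ would, by uniqueness of the length-$7$ complex decomposition, force all $l_i$ to be real, contradicting that we are in case (\ref{(ii)}). Hence $\mathrm{rk}_\RR(f) \ge 8$ on an open set, so $8$ is typical.

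The main obstacle is the lower bound argument in case (\ref{(ii)}): one must know that the generic complex decomposition $f=\sum l_i^5$ of length $7$ is \emph{unique}, so that the existence of any real decomposition of length $7$ would coincide with it and force all summands real. This uniqueness for ternary quintics is the analogue of Sylvester's pentahedral theorem for cubic surfaces and is exactly the fact invoked at the start of this section ("As for cubic surfaces, we have uniqueness of the general decomposition"); granting it, the two typicality statements follow as above. One should also check that cases (\ref{(i)}) and (\ref{(ii)}) genuinely have nonempty interior — this is immediate since the map sending $f$ to its seven (complex, well-defined, conjugation-symmetric) decomposition points is continuous and the count of real points among them is locally constant wherever the decomposition stays smooth and unique, which is a dense open (in fact full-measure) condition.
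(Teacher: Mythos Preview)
Your argument is correct and follows the same route as the paper: case~(\ref{(i)}) gives an open set of real rank exactly $7$, and case~(\ref{(ii)}) gives an open set where the real rank lies between $8$ and $10$ (lower bound from uniqueness of the length-$7$ complex decomposition, upper bound by rewriting the conjugate pair $l_6^5+\bar l_6^{\,5}$ as a real binary quintic, hence of real rank $\le 5$). One small point worth making explicit: your final step ``$\mathrm{rk}_\RR(f)\ge 8$ on an open set, so $8$ is typical'' does not follow on its own --- it only shows that \emph{some} typical rank is $\ge 8$; to conclude that $8$ itself is typical you must invoke Theorem~\ref{thm:typical} (all ranks between the smallest and largest typical ranks are typical). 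The paper leaves this step implicit as well.
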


 Now, analogously as what we have done in the previous section for ternary quartics, we can show that typical ranks for ternary quintics are between 7 and 13.

\begin{prop}
Typical ranks for ternary quintics are among $7$ and $13$.
\end{prop}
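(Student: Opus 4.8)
The plan is to mimic the strategy used for ternary quartics in Proposition~\ref{prop:typquartic}, reducing a generic ternary quintic to a binary form via a single polar, and then invoking the known bound on typical ranks of binary quintics. First I would take a generic $f\in S^5\RR^3$ and a generic point $Q\in\PP^2$. The polar $P_Q(f)$ lies in $S^4\RR^3$, so it is a ternary quartic, and by Proposition~\ref{prop:typquartic} its real rank is at most $8$; write $P_Q(f)=\sum_{i=1}^{8}\lambda_i \ell_i^4$ with $\ell_i$ real linear forms. Choosing $Q$ generic we may assume $Q\notin \ell_i$ for every $i$, so each $P_Q(\ell_i)\neq 0$. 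Then the form
$$g = f - \sum_{i=1}^{8}\bigl(5P_Q(\ell_i)\bigr)^{-1}\lambda_i \ell_i^5$$
satisfies $P_Q(g)=0$, hence $g$ is (after a change of coordinates placing $Q$ at a coordinate point) a form depending on only two variables, i.e.\ a binary quintic.

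Next I would invoke the result of \cite{CO,B}, recalled in the introduction, that binary forms of degree $d$ have all typical ranks between $\lfloor d/2\rfloor+1$ and $d$; for $d=5$ these are $3,4,5$. In particular the real rank of the binary quintic $g$ is at most $5$. Combining, a generic $f$ has real rank at most $8+5=13$. Together with the lower end — the smallest typical rank is $7$, the Alexander--Hirschowitz generic complex rank, by \cite[Theorem~2]{BT} — and with Theorem~\ref{thm:typical} which guarantees every intermediate value is also typical, we conclude that all typical ranks for ternary quintics lie between $7$ and $13$. This is exactly the assertion of the proposition.

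The only subtlety, and the step I would be most careful about, is the genericity argument ensuring that a \emph{generic} $f$ admits such a reduction with $Q\notin\ell_i$: one needs that for generic $f$ there is a nonempty (hence positive-measure) set of $Q$ for which $P_Q(f)$ has real rank exactly $8$ \emph{and} such that the chosen decomposition avoids $Q$. Since $8$ is a typical rank for ternary quartics, the set of ternary quartics of real rank $\le 8$ contains an open set, and the map $Q\mapsto P_Q(f)$ is linear and nonconstant (as $X$ is nondegenerate), so for generic $f$ a positive-volume set of $Q$ lands in this open set; within such a set one has a decomposition into fourth powers, and avoiding the finitely many lines $\ell_i$ through a further generic choice of $Q$ costs nothing. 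This is entirely parallel to the quartic case and carries no real obstacle; the bound $13$ is simply $8+5$ and the argument terminates.
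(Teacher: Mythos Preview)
Your proof is correct and follows essentially the same approach as the paper: take the polar $P_Q(f)$ to reduce to a ternary quartic of rank $\le 8$, lift the fourth powers to fifth powers so that the remainder $g$ satisfies $P_Q(g)=0$ and is therefore a binary quintic of rank $\le 5$, giving the upper bound $8+5=13$. Your version is in fact slightly more explicit (you write down the coefficients $(5P_Q(\ell_i))^{-1}\lambda_i$ and discuss the genericity of $Q$ more carefully), but the argument is the same as the paper's.
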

\begin{proof}
For a generic $f\in S^5\mathbb{R}^3$ and a generic $Q\in\PP^2$ we have that  $P_Q(f)\in S^4\mathbb{R}^3$ and  $rk(P_Q(f))\le 8$ (by Proposition \ref{prop:typquartic}),
say $P_Q(f)=\sum_{i=1}^8l_i^3$. If $Q\notin l_i$,  then there exists $g\in S^4V$  and $c_i\in\RR$ such that 
$$f=\sum_{i=1}^8c_i l_i^4+g$$ and $P_Q(g)=0$.
Since  $g$ is a binary form  (it may be written as a polynomial in only two variables) and since for $S^5\RR^2$ we have three typical ranks $r=3,4,5$ (see \cite{CO,B})
then the typical ranks of ternary quintics are $\le 8+5=13$.

\end{proof}
 
\vskip 0.5cm

\noindent \textbf{Acknowledgement} We would like to thank F. O. Schreyer for useful talks about the paper \cite{KO}. We thank the Simons Institute for the Theory of Computing in Berkeley, CA for their generous support while in residence during the program on {\em Algorithms and Complexity in Algebraic Geometry}. The second author was partially supported by the Sloan Research Fellowship and NSF CAREER award DMS-1352073. A. Bernardi and G. Ottaviani are member of GNSAGA-INDAM.

\bigskip
{\small

}

\end{document}